\newtheorem{theorem}{Theorem}[section]
\newtheorem*{theorem*}{Theorem}
\newtheorem{lemma}[theorem]{Lemma}
\newtheorem{proposition}[theorem]{Proposition}
\newtheorem*{proposition*}{Proposition}
\newtheorem{corollary}[theorem]{Corollary}
\newtheorem*{corollary*}{Corollary}
\newtheorem{conjecture}[theorem]{Conjecture}
\newtheorem{cit}[theorem]{Citation}
\newtheorem*{conjecture*}{Conjecture}
\newtheorem{question}[theorem]{Question}
\newtheorem*{question*}{Question}
\theoremstyle{definition}
\newtheorem{definition}[theorem]{Definition}
\newtheorem*{definition*}{Definition}
\newtheorem{remark}[theorem]{Remark}
\newcommand{\N}{\mathbb{N}}
\newcommand{\Z}{\mathbb{Z}}
\newcommand{\R}{\mathbb{R}}
\newcommand{\Cantor}{\mathfrak{C}}
\DeclareMathOperator{\GL}{GL}
\DeclareMathOperator{\Sym}{Sym}
\DeclareMathOperator{\Symfin}{Sym_{fin}}
\DeclareMathOperator{\F}{F}
\DeclareMathOperator{\Stab}{Stab}
\DeclareMathOperator{\id}{id}
\numberwithin{equation}{section}
\begin{document}

\title{Finite presentability of twisted Brin--Thompson groups}
\date{\today}
\subjclass[2020]{Primary 20F65;   
                 Secondary 20E32, 57M07} 

\keywords{Thompson group, simple group, finitely presented, word problem, Boone--Higman conjecture}

\author[M.~C.~B.~Zaremsky]{Matthew C.~B.~Zaremsky}
\address{Department of Mathematics and Statistics, University at Albany (SUNY), Albany, NY}
\email{mzaremsky@albany.edu}

\begin{abstract}
Given a group $G$ acting faithfully on a set $S$, we characterize precisely when the twisted Brin--Thompson group $SV_G$ is finitely presented. The answer is that $SV_G$ is finitely presented if and only if we have the following: $G$ is finitely presented, the action of $G$ on $S$ has finitely many orbits of two-element subsets of $S$, and the stabilizer in $G$ of any element of $S$ is finitely generated. Since twisted Brin--Thompson groups are simple, a consequence is that any subgroup of a group admitting an action as above satisfies the Boone--Higman conjecture. In the course of proving this, we also establish a sufficient condition for a group acting cocompactly on a simply connected complex to be finitely presented, even if certain edge stabilizers are not finitely generated, which may be of independent interest.
\end{abstract}

\maketitle
\thispagestyle{empty}

\section*{Introduction}

Twisted Brin--Thompson groups are a family of groups $SV_G$, parameterized by an arbitrary group $G$ and a set $S$ on which $G$ acts faithfully. These groups were introduced by Belk and the author in \cite{belk22}, as a way of ``twisting'' the Brin--Thompson groups introduced by Brin in \cite{brin04}. The original Brin--Thompson groups amount to the $G=\{1\}$ case. Perhaps the most important fact about twisted Brin--Thompson groups is that they are always simple, for any $G$ and $S$ \cite[Theorem~3.4]{belk22}. It is also easy to characterize when $SV_G$ is finitely generated: this happens if and only if $G$ is finitely generated and the action of $G$ on $S$ has finitely many orbits \cite[Theorem~A]{belk22}. In particular, twisted Brin--Thompson groups provide an easy way to embed an arbitrary finitely generated group $G$ into a finitely generated simple group $SV_G$, for appropriate choice of $S$.

It is much more difficult to characterize when $SV_G$ is finitely presented. In \cite[Theorem~D]{belk22} a sufficient condition was given, with restrictions on orbits and stabilizers for all finite subsets of $S$. We also posed a conjecture, the $n=2$ case of \cite[Conjecture~H]{belk22}, predicting that these conditions could be relaxed to only involve orbits of subsets of size $2$ and stabilizers of subsets of size $1$. In this paper we prove this conjecture, thus characterizing when a twisted Brin--Thompson group is finitely presented. Let us state this more precisely. Let $G$ be a group acting on a set $S$. We will say that the action is of \emph{type~(A)} if the following holds:

\begin{quote}
\textbf{(A):} The action is faithful, the group $G$ is finitely presented, each $\Stab_G(s)$ for $s\in S$ is finitely generated, and there are finitely many $G$-orbits of two-element subsets of $S$.
\end{quote}

Our main theorem is now the following:

{\renewcommand*{\thetheorem}{\Alph{theorem}}
\begin{theorem}\label{thrm:main}
Let $G$ be a group acting faithfully on a set $S$. Then the twisted Brin--Thompson group $SV_G$ is finitely presented if and only if the action of $G$ on $S$ is of type~(A).
\end{theorem}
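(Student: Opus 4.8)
The plan is to prove the two directions separately, with essentially all the work in showing that type~(A) is sufficient; that direction follows the familiar template — make $SV_G$ act on a suitable complex and deduce finiteness from the geometry — but the weakness of the hypotheses on stabilizers forces a genuinely new geometric input.

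\emph{Sufficiency.} Assuming the action is of type~(A), the plan is to let $SV_G$ act on the Stein--Farley-style ``complex of dyadic partitions'' $X$ of the Cantor cube $\Cantor^S$ underlying the finiteness-property arguments of \cite{belk22}: vertices are dyadic brick-partitions, and higher cells record pairwise-commuting elementary expansions. Three things must be checked. \textbf{(1) Connectivity:} $X$ is simply connected; this holds with no hypotheses on $G$, by a standard discrete-Morse-theory / descending-link argument in the style of Bestvina--Brady, filtering $X$ by the number of bricks. \textbf{(2) Cocompactness:} the action has finitely many cell-orbits through dimension~$2$; this is precisely where one uses that $G$ is finitely generated, that $S$ has finitely many $G$-orbits, and that there are finitely many $G$-orbits of two-element subsets (the last governing the cells that refine two coordinates at once). \textbf{(3) Stabilizers:} vertex stabilizers are extensions of a finite group by a finite direct power of $G$, hence finitely presented since $G$ is; stabilizers of single-coordinate expansion edges involve the groups $\Stab_G(s)$ and so are finitely generated by hypothesis; but the stabilizers of the edges refining two coordinates $s,t$ simultaneously involve subgroups of the form $\Stab_G(s)\cap\Stab_G(t)$, which need not be finitely generated even when $\Stab_G(s)$ and $\Stab_G(t)$ are.

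Because of \textbf{(3)}, the classical criterion — a group acting cocompactly on a simply connected complex with finitely presented vertex stabilizers and finitely generated edge stabilizers is finitely presented — does not apply, and establishing a suitable relaxation of it is the step I expect to be the main obstacle. The relaxed statement should read, roughly: such a group is still finitely presented if the vertex stabilizers are finitely presented, the edge stabilizers are finitely generated except on a prescribed family of ``bad'' edges, and each bad edge $e$ with endpoints $v,w$ has its stabilizer controlled by $\Stab(v)$ and $\Stab(w)$ — for instance, the inclusions $\Stab(e)\hookrightarrow\Stab(v),\Stab(w)$ jointly generate $\Stab(e)$, or $\Stab(v)\cap\Stab(w)\le\Stab(e)$ — so that the a priori infinitely many relations coming from bad edges are consequences of the vertex relations. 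Its proof will follow the usual extraction of a presentation from a cocompact action on a simply connected complex (generators: vertex-stabilizer generators together with one element per edge; relations: vertex-stabilizer relations, edge relations, and one per $2$-cell), keeping careful track of which edge relations are genuinely needed and showing the bad-edge ones are redundant. The delicate part is to frame the hypotheses so that the lemma is both true and actually verifiable for $X$; granting that, checking \textbf{(1)}--\textbf{(3)} and that the bad edge stabilizers in $SV_G$ are indeed pinched between their vertex stabilizers completes the sufficiency direction.

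\emph{Necessity.} Conversely, if $SV_G$ is finitely presented then it is finitely generated, so \cite[Theorem~A]{belk22} gives that the action is faithful, $G$ is finitely generated, and $S$ has finitely many $G$-orbits. The remaining three conditions I would prove by contraposition: if one fails, exhibit $SV_G$ as a non-stationary directed union of subgroups — approximating $G$, the point stabilizers, or the action on two-element subsets finitely much at a time, in the spirit of the finite-generation argument of \cite[Theorem~A]{belk22} but now also accounting for relations — so that finite presentability would force $SV_G$ to equal one stage, a contradiction; alternatively, once finitely many orbits of two-element subsets is in hand, $SV_G$ acts cocompactly on $X$ and the required finiteness of $G$ and of the $\Stab_G(s)$ can be read off its cell stabilizers. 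This direction should be comparatively routine.
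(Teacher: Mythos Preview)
Your sufficiency argument has a genuine gap at step~(2): under type~(A) alone, the action of $SV_G$ on the full Stein complex $X$ (or any truncation $X_m$) is \emph{not} cocompact, even on the $1$-skeleton. An edge $[h]<[fh]$ in $X_m$ is governed by an elementary multicolored forest $f$ whose spectrum can be any subset of $S$ of size up to roughly $m$, so finitely many edge-orbits would require finitely many $G$-orbits of $k$-element subsets for all $k$ up to $m$, not just $k=2$; your parenthetical ``the last governing the cells that refine two coordinates at once'' conflates $2$-cells with two-coordinate edges. The paper's missing idea here is to pass to a strictly smaller subcomplex $X(k)\subseteq X$ built only from \emph{$k$-elementary} forests (spectrum of size $\le k$); for $k=2$ type~(A) does give cocompactness of $X_m(2)$, but the price is that $X(2)$ is no longer obviously contractible, and a separate Nerve-Lemma argument (Proposition~\ref{prop:hi_conn}) is needed to show $X_m(2)$ is still simply connected. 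Your relaxed finite-presentability criterion is also off target: the conditions you float (bad-edge stabilizers ``pinched between'' vertex stabilizers, or $\Stab(v)\cap\Stab(w)\le\Stab(e)$) do not visibly make the bad-edge relations redundant. The paper's actual criterion (Proposition~\ref{prop:technical}) is that each bad edge $\{v,w\}$ admits an edge \emph{path} $e_1,\dots,e_n$ from $v$ to $w$ through good edges with $\bigcap_i \Gamma_{e_i}$ of finite index in $\Gamma_e$; then the bad-edge relation, restricted to that finite-index subgroup, factors through the $R(f_i,*)$ via triangle relations along the path, leaving only finitely many coset-representative relations. In $X_m(2)$ the path lies in the finite interval $[v,w]$ and consists of short (rank-jump-one) edges, whose stabilizers involve only single-point stabilizers $\Stab_G(s)$.

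For necessity, your directed-union sketch is vague and would be laborious to make precise; the paper instead constructs an explicit quasi-retraction $\rho_{\kappa_0}\colon SV_G\to \Z\wr_S G$ (recording, at a fixed basepoint, the net change in depth in each coordinate together with the germinal twist), so finite presentability passes to $\Z\wr_S G$ by Alonso, and then Cornulier's characterization of finitely presented permutational wreath products yields type~(A) in one stroke.
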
}

As a remark, the sufficient conditions in \cite{belk22} to get $SV_G$ to be finitely presented, assuming $G$ is, required that $\Stab_G(T)$ be finitely presented for all finite $T\subseteq S$, and that for each $k\in\N$ there are finitely many $G$-orbits of $k$-element subsets. In \cite{zar_taste} finite presentability of the stabilizers was relaxed to finite generation. The ``if'' part of our main theorem here is therefore a significant improvement on these existing results, since now one only needs to understand stabilizers of single points, and orbits of pairs.

We should also point out one crucial aspect of our proof of Theorem~\ref{thrm:main}, which is a generalization of the so called Stein complex $X$ of $SV_G$ from \cite{belk22}, to a family of subcomplexes $X(k)$ ($k\in\N)$ that are significantly smaller. These may no longer be contractible like $X$, but are still simply connected for large enough $k$, and thus still useful for trying to deduce finite presentability of $SV_G$.

\medskip

An immediate consequence of Theorem~\ref{thrm:main} is the following sufficient condition for a group to satisfy the Boone--Higman conjecture.

{\renewcommand*{\thetheorem}{\Alph{theorem}}
\begin{corollary}\label{cor:main}
Any subgroup of a group admitting an action of type~(A) (has solvable word problem and) satisfies the Boone--Higman conjecture. \qed
\end{corollary}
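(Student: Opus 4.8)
The plan is to derive the corollary directly from Theorem~\ref{thrm:main}, combined with two facts about twisted Brin--Thompson groups already recorded in \cite{belk22}: that $SV_G$ is simple for every $G$ and $S$ \cite[Theorem~3.4]{belk22}, and that a faithful action of $G$ on $S$ yields an embedding $G\hookrightarrow SV_G$ (as the ``diagonal'' copy of $G$ inside $SV_G$). So suppose the action of $G$ on $S$ is of type~(A) and let $H\le G$ be an arbitrary subgroup. By Theorem~\ref{thrm:main}, $SV_G$ is finitely presented, and since it is also simple, it is a finitely presented simple group. Composing the inclusion $H\hookrightarrow G$ with the embedding $G\hookrightarrow SV_G$ exhibits $H$ as a subgroup of a finitely presented simple group, which is precisely the assertion that $H$ satisfies the Boone--Higman conjecture.

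For the parenthetical clause about the word problem, I would invoke Kuznetsov's theorem: a recursively presented (in particular, a finitely presented) simple group has solvable word problem. Hence $SV_G$ has solvable word problem. If $H\le G$ is moreover finitely generated, then a word in a fixed finite generating set of $H$ can be mechanically rewritten, using fixed expressions of those generators as elements of $SV_G$, into a word in a finite generating set of $SV_G$, and its triviality then decided via the solvable word problem of $SV_G$; thus $H$ has solvable word problem.

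I do not expect any real obstacle here: all of the mathematical content is carried by Theorem~\ref{thrm:main} and by the already-established simplicity of $SV_G$, and the remainder is formal. The only points requiring a little care are citing the embedding $G\hookrightarrow SV_G$ and the simplicity of $SV_G$ correctly from \cite{belk22}, and observing that the word-problem clause is naturally understood as applying to finitely generated subgroups, since the word problem is a property of finitely generated groups.
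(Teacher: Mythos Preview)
Your proposal is correct and matches the paper's approach: the corollary is marked with \qed{} and treated as immediate from Theorem~\ref{thrm:main}, the simplicity of $SV_G$ \cite[Theorem~3.4]{belk22}, and the embedding $\iota_\varnothing\colon G\hookrightarrow SV_G$ from \cite{belk22}. Your added justification of the word-problem clause via Kuznetsov's theorem is exactly the standard reasoning implicit in the paper's convention that ``satisfies the Boone--Higman conjecture'' includes having solvable word problem.
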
}

The \emph{Boone--Higman conjecture} predicts that a finitely generated group has solvable word problem if and only if it embeds into a finitely presented simple group. The backward direction is easy, but the forward direction has been open for decades. This conjecture was posed by Boone and Higman in the early 1970's \cite{boone74}, and has attracted renewed attention in recent years. Prominent examples of groups known to satisfy the conjecture include $\GL_n(\Z)$ \cite{scott84}, hence virtually special and virtually nilpotent groups, and also hyperbolic groups \cite{bbmz_hyp}, finitely presented or contracting self-similar groups \cite{bbmz_hyp,zaremsky_fpss,belkmatuccicontracting}, and Baumslag--Solitar groups and (finite rank free)-by-cyclic groups \cite{bux_boonehigman}. See \cite{bbmz_survey} for more on the history and progress around this conjecture.

From now on, whenever we say that a group, ``satisfies the Boone--Higman conjecture,'' we are implicity saying that it has solvable word problem, and explicitly saying that it embeds into a finitely presented simple group. We should mention that we are not aware of any examples of groups for which the Boone--Higman conjecture is known to hold but the group does not embed into a group admitting an action of type~(A). Thus, it is an interesting question (see Question~\ref{quest:BHC_PBHC}) whether embedding into a group admitting an action of type~(A) could possibly be equivalent to embedding into a finitely presented simple group. In Section~\ref{sec:bh} we give some new examples of groups admitting actions of type~(A), and discuss some general results related to the Boone--Higman conjecture.

We should mention that Theorem~\ref{thrm:main} has a direct connection to finite presentability of permutational wreath products. By \cite{cornulier06}, the permutational wreath product $W\wr_S G \coloneqq \bigoplus_S W \rtimes G$ is finitely presented if and only if the action of $G$ on $S$ is of type~(A) and $W\ne\{1\}$ is finitely presented. Thus, a more concise way to phrase Theorem~\ref{thrm:main} is to say that $SV_G$ is finitely presented if and only if $W\wr_S G$ is finitely presented, for any choice of finitely presented non-trivial $W$, for example $W=\Z$. (Note that in \cite{cornulier06} the action of $G$ on $S$ need not be faithful, but with twisted Brin--Thompson groups we always want the action to be faithful.) As a remark, in \cite{bbmz_hyp} it is explained that $SV_G$ is the ``topological full group'' of the wreath product $V\wr_S G$, so the above is especially relevant for $W=V$.

Finally, let us emphasize one new technical result that we use to prove the relevant $SV_G$ are finitely presented, namely Proposition~\ref{prop:technical}. It is well known that if a group acts cocompactly on a simply connected complex, with finitely presented vertex stabilizers and finitely generated edge stabilizers, then the group is finitely presented. In Proposition~\ref{prop:technical}, we find a way to still achieve finite presentability of the group even if certain edge stabilizers are not finitely generated, which could be of independent interest.

\medskip

This paper is organized as follows. In Section~\ref{sec:tbt} we recall some background on twisted Brin--Thompson groups $SV_G$. In Section~\ref{sec:qr} we prove that $\Z\wr_S G$ is a quasi-retract of $SV_G$, thus establishing the ``only if'' direction of Theorem~\ref{thrm:main}. In Section~\ref{sec:fp} we prove a criterion, Proposition~\ref{prop:technical}, for deducing finite presentability of a group from its action on a complex, even if some edge stabilizers are not finitely generated. This leads in Section~\ref{sec:stein} to the proof of the ``if'' direction of Theorem~\ref{thrm:main}, using a variation of the Stein complex for $SV_G$. Finally, in Section~\ref{sec:bh} we discuss some implications for the Boone--Higman conjecture.

\subsection*{Acknowledgments} Many thanks are due to Jim Belk and Francesco Fournier-Facio; to Jim in particular for the idea of the quasi-retraction $\rho_{\kappa_0}$ in Section~\ref{sec:qr}, and to Francesco in particular for the (idea for and) proof of Proposition~\ref{prop:commensurable}. I am also grateful to the referee for many helpful comments and suggestions. The author is supported by grant \#635763 from the Simons Foundation.

\section{Twisted Brin--Thompson groups and groupoids}\label{sec:tbt}

Let $\Cantor=\{0,1\}^\N$ be the usual Cantor set. For a set $S$, we can consider the \emph{Cantor cube} $\Cantor^S$, that is the set of all functions from $S$ to $\Cantor$, with the usual product topology. Let $\{0,1\}^*$ be the set of all finite binary sequences, and write $\varnothing$ for the empty word. Given a function $\psi\colon S\to\{0,1\}^*$ such that $\psi(s)=\varnothing$ for all but finitely many $s\in S$, we define the \emph{dyadic brick} $B(\psi)$ to be
\[
B(\psi)\coloneqq \{\kappa \in \Cantor^S\mid \psi(s) \text{ is a prefix of } \kappa(s) \text{ for each }s\in S\}\text{.}
\]
This is a (basic) open set in $\Cantor^S$. There is a \emph{canonical homeomorphism} $h_\psi \colon \Cantor^S \to B(\psi)$ given by
\[
h_\psi(\kappa)(s) \coloneqq \psi(s) \cdot \kappa(s) \text{.}
\]
Any composition $h_\psi \circ h_\varphi^{-1}$ will also be called a \emph{canonical homeomorphism}, from one dyadic brick to another.

\begin{definition}[Brin--Thompson group]
The \emph{Brin--Thompson group} $SV$ is the group of homeomorphisms of $\Cantor^S$ obtained by partitioning $\Cantor^S$ into dyadic bricks in two ways, $B(\varphi_1),\dots,B(\varphi_n)$ and $B(\psi_1),\dots,B(\psi_n)$, and mapping each $B(\varphi_i)$ to $B(\psi_i)$ via the canonical homeomorphism.
\end{definition}

When $S$ is finite, say $|S|=n$, we will write $nV$. These groups were first introuced by Brin in \cite{brin04}, as an infinite family generalizing Thompson's group $V=1V$.

Given a group $G$ acting faithfully on $S$, for each $\gamma\in G$ define the \emph{twist homeomorphism} $\tau_\gamma$ of $\Cantor^S$ to be the homeomorphism permuting the coordinates via $\gamma$, i.e.,
\[
\tau_\gamma(\kappa)(s) \coloneqq \kappa(\gamma^{-1}s) \text{.}
\]
More elegantly, this means $\tau_\gamma(\kappa)(\gamma s)=\kappa(s)$. As in \cite{belk22}, we view the action of $G$ on $S$ as a left action, and the left/right, inverse/not inverse conventions here ensure that $\tau_{\gamma\gamma'}=\tau_\gamma \tau_{\gamma'}$. We will also say \emph{twist homeomorphism} for anything of the form $h_\psi \circ \tau_\gamma \circ h_\varphi^{-1}$, so this is the twist homeomorphism from $B(\varphi)$ to $B(\psi)$ using $\gamma$.

Now we can define twisted Brin--Thompson groups, first introduced in \cite{belk22}. See also \cite{zar_taste} for a shorter introduction.

\begin{definition}[Twisted Brin--Thompson group]
The \emph{twisted Brin--Thompson group} $SV_G$ is the group of homeomorphisms of $\Cantor^S$ obtained by partitioning $\Cantor^S$ into dyadic bricks in two ways, $B(\varphi_1),\dots,B(\varphi_n)$ and $B(\psi_1),\dots,B(\psi_n)$, and mapping each $B(\varphi_i)$ to $B(\psi_i)$ via a twist homeomorphism using some $\gamma_i\in G$.
\end{definition}

Now let $\Cantor^S(m)$ be the disjoint union of $m$ copies of the Cantor cube $\Cantor^S$. We have a notion of dyadic bricks in $\Cantor^S(m)$, just by taking dyadic bricks in the cubes, and thus we have canonical homeomorphisms and twist homeomorphisms even between dyadic bricks in different cubes.

\begin{definition}[(Twisted) Brin--Thompson groupoids, rank, corank]
The \emph{Brin--Thompson groupoid} $S\mathcal{V}$ is the groupoid of all homeomorphisms from some $\Cantor^S(m)$ to some $\Cantor^S(n)$ given by partitioning the domain into some number of dyadic bricks, partitioning the codomain into the same number of dyadic bricks, and sending the domain bricks to the codomain bricks via canonical homeomorphisms. If we instead use twist homeomorphisms, we get the \emph{twisted Brin--Thompson groupoid} $S\mathcal{V}_G$. A homeomorphism from $\Cantor^S(m)$ to $\Cantor^S(n)$ has \emph{rank} $n$ and \emph{corank} $m$.
\end{definition}

Note that $SV_G$ is the subgroup of $S\mathcal{V}_G$ consisting of all elements with rank and corank $1$. Let us also record here the definition of some other elements that will turn out to be important later, namely, simple splits.

\begin{definition}[Simple split]\label{def:simple_split}
The \emph{simple split} $x_s$ for $s\in S$ is the element of $S\mathcal{V}$ with corank $1$ and rank $2$ given by partitioning $\Cantor^S$ into the dyadic bricks $B(\psi_0)$ and $B(\psi_1)$, where $\psi_i\colon S\to \{0,1\}^*$ sends $s$ to $i$ and all other $s'$ to $\varnothing$, and then mapping $B(\psi_0)$ to the first cube of $\Cantor^S(2)$ and $B(\psi_1)$ to the second cube, both via canonical homeomorphisms. 
\end{definition}

\section{Quasi-retractions}\label{sec:qr}

In this section we prove the forward direction of Theorem~\ref{thrm:main}:

\begin{proposition}\label{prop:forward}
If $SV_G$ is finitely presented, then the action of $G$ on $S$ is of type~(A).
\end{proposition}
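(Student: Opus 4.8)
The strategy is to show that the permutational wreath product $\Z\wr_S G$ is a quasi-retract of $SV_G$. Since finite presentability is a quasi-isometry invariant, and indeed is inherited by quasi-retracts (by a result of Alonso, say), this will show that if $SV_G$ is finitely presented then so is $\Z\wr_S G$, and then the characterization of finite presentability of permutational wreath products due to Cornulier (cited in the excerpt as \cite{cornulier06}) immediately gives that the action of $G$ on $S$ is of type~(A), with $\Z$ playing the role of the nontrivial finitely presented group $W$.

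\begin{proof}[Proof sketch]
First I would realize $\Z\wr_S G = \bigoplus_S \Z \rtimes G$ concretely as a subgroup of $SV_G$. The copy of $G$ embeds via the twist homeomorphisms $\tau_\gamma$ of $\Cantor^S$ (these are honest elements of $SV_G$, with one domain brick and one codomain brick). For the base group $\bigoplus_S \Z$, one uses the simple splits $x_s$ of Definition~\ref{def:simple_split}, or rather the elements of $SV_G$ they generate: for each $s\in S$ one builds an infinite-order element supported on a small dyadic brick involving only the coordinate $s$, analogous to how a single copy of $\Z$ sits inside $1V=V$ as (a conjugate of) Thompson's group. Two such generators for $s\ne s'$ commute because their supports can be taken to involve disjoint coordinates, and conjugating the $s$-generator by $\tau_\gamma$ yields the $(\gamma s)$-generator, so the semidirect product structure is respected; faithfulness of the $G$-action on $S$ is what makes this an embedding rather than merely a homomorphism. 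Call this subgroup $\Lambda \le SV_G$.

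Next I would construct the quasi-retraction $\rho \colon SV_G \to \Lambda$. The idea (attributed in the acknowledgments to Belk) is to fix a ``basepoint'' coordinate, or rather a basepoint brick $B(\kappa_0)$, and send an arbitrary element $f \in SV_G$ to the element of $\Lambda$ that records ``what $f$ does near the basepoint'': the $G$-component of $\rho(f)$ is the twist $\gamma$ that $f$ uses on the domain brick containing the basepoint, and the $\bigoplus_S\Z$-component records the finitely many coordinatewise prefix changes $f$ makes to that brick. One checks that $\rho$ restricts to the identity on $\Lambda$ (up to a bounded error), and that $\rho$ is Lipschitz with respect to word metrics: moving $f$ by one generator of $SV_G$ changes $\rho(f)$ by a bounded amount in $\Lambda$. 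This makes $\Lambda$ a quasi-retract of $SV_G$.

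The main obstacle is the Lipschitz estimate for $\rho$: one must choose the generating sets and the definition of $\rho$ carefully so that multiplying $f$ by a generator of $SV_G$ — which can refine the partition near the basepoint arbitrarily, or move the basepoint brick around — only perturbs the recorded data by a bounded amount in the word metric of $\Lambda$. This requires being careful about how refinements of dyadic bricks at the basepoint translate into the $\bigoplus_S\Z$ coordinate, and controlling the interaction between the twist component and the base component. Once this estimate is in hand, the quasi-retract property follows formally, and then finite presentability of $SV_G$ forces finite presentability of $\Z\wr_S G$, hence type~(A) of the action by Cornulier's theorem.
\end{proof}
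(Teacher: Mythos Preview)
Your proposal is correct and follows essentially the same route as the paper: embed $\Z\wr_S G$ into $SV_G$, construct the quasi-retraction $\rho_{\kappa_0}$ recording the germinal twist and the coordinatewise ``change in depth'' at a fixed point $\kappa_0\in\Cantor^S$ (a point, not a brick), and then invoke Alonso and Cornulier. The paper's device for the Lipschitz estimate you flag as the main obstacle is to define $\rho_\kappa$ for \emph{every} $\kappa$ and prove the cocycle identity $\rho_\kappa(hg)=\rho_{g(\kappa)}(h)\rho_\kappa(g)$; then $\rho_{\kappa_0}(ah)=\rho_{h(\kappa_0)}(a)\rho_{\kappa_0}(h)$, and one only needs that the set $\{\rho_\kappa(a):\kappa\in\Cantor^S,\ a\in A\}$ is finite, which is immediate since each generator $a$ has a finite domain partition.
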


As a first step, we relate condition~(A) to the permutational wreath product $\Z\wr_S G$. Given a group $G$ acting on a set $S$, and a group $W$, the \emph{permutational wreath product} $W\wr_S G$ is the semidirect product
\[
W\wr_S G \coloneqq \bigoplus\limits_{s\in S}W \rtimes G \text{,}
\]
where the action of $G$ on the direct sum is given by permuting coordinates; more precisely, $\gamma\in G$ sends $(w_s)_{s\in S}$ to $(w_{\gamma^{-1} s})_{s\in S}$. Viewing elements of $\bigoplus_S W$ as functions from $S$ to $W$, this action amounts to precomposition by $\gamma^{-1}$, so it is a left action. Cornulier proved a precise characterization of when $W\wr_S G$ is finitely presented, namely:

\begin{cit}\cite[Theorem~1.1]{cornulier06}\label{cit:cornulier}
For $W\ne\{1\}$, the group $W\wr_S G$ is finitely presented if and only if $W$ is finitely presented and the action of $G$ on $S$ is of type~(A).
\end{cit}

Note that in \cite{cornulier06} the condition of having finitely many orbits of two-element subsets of $S$ is phrased as having finitely many orbits in $S\times S$ under the diagonal action of $G$, but this is equivalent.

Since $\Z$ is non-trivial and finitely presented, Citation~\ref{cit:cornulier} implies that if $\Z\wr_S G$ is finitely presented, then the action of $G$ on $S$ is of type~(A). Thus, to prove Proposition~\ref{prop:forward} it suffices to prove that if $SV_G$ is finitely presented then so is $\Z\wr_S G$. We will do this by showing that $\Z\wr_S G$ is a quasi-retract of $SV_G$.

\begin{definition}[Quasi-retract]
A \emph{quasi-retraction} $\rho\colon X\to Y$ from a metric space $X$ to a metric space $Y$ is a coarse Lipschitz function such that there exists a coarse Lipschitz function $\zeta\colon Y\to X$ with $\rho\circ\zeta$ uniformly close to the identity on $Y$. In this case we call $Y$ a \emph{quasi-retract} of $X$.
\end{definition}

Here a function $f\colon X\to Y$ is \emph{coarse Lipschitz} if there exist constants $C,D>0$ such that $d(f(x),f(x'))\le C d(x,x') + D$ for all $x,x'\in X$.

If $G$ and $H$ are finitely generated groups, viewed as metric spaces with word metrics coming from some finite generating sets, then Alonso proved in \cite{alonso94} that if $G$ is of type $\F_n$ and $H$ is a quasi-retract of $G$ then $H$ is of type $\F_n$. In particular when $n=2$ we get that every quasi-retract of a finitely presented group is finitely presented.

\medskip

Let us now begin to construct a quasi-retraction $SV_G \to \Z\wr_S G$. The idea of this quasi-retraction is due to Jim Belk. Let $h\in SV_G$, so $h$ is a homeomorphism from $\Cantor^S$ to itself given by partitioning the domain into dyadic bricks $B(\varphi_1),\dots,B(\varphi_n)$, partitioning the range into dyadic bricks $B(\psi_1),\dots,B(\psi_n)$, and mapping each $B(\varphi_i)$ to $B(\psi_i)$ via a twist homeomorphism using some $\gamma_i\in G$. Let $\kappa\in \Cantor^S$, say $\kappa\in B(\varphi_i)$, so $h(\kappa)\in B(\psi_i)$. For each $s\in S$, let $d_\kappa^s(h)$ be the length of $\varphi_i(s)\in \{0,1\}^*$ and let $r_{h(\kappa)}^s(h)$ be the length of $\psi_i(s)\in \{0,1\}^*$. (Note that $d_\kappa^s(h)$ and $r_{h(\kappa)}^s(h)$ depend on the choice of domain and range partitions, and are not actually well defined solely in terms of $h$; including $\varphi_i$ and $\psi_i$ in the notation is just too bulky.) Note that $d_\kappa^s(h)$ and $r_{h(\kappa)}^s(h)$ are zero for all but finitely many $s\in S$, and intuitively they are measuring how many times the dyadic bricks containing $\kappa$ and its image have been halved in the $s$ dimension, which can be viewed as a sort of ``depth'' in that dimension.

Finally, with all the above notation, define
\[
\rho_\kappa \colon SV_G \to \Z\wr_S G \quad \text{ via } \quad h \mapsto ((r_{h(\kappa)}^{s}(h)-d_\kappa^{\gamma_i^{-1}s}(h))_{s\in S},\gamma_i) \text{.}
\]
Intuitively, this measures the ``change in depth'' at $\kappa$, in every dimension, and also records the twist used at $\kappa$.

To see why these $\rho_\kappa$ are useful for getting a quasi-retraction from $SV_G$ to $\Z\wr_S G$, we need a series of lemmas.

\begin{lemma}
The function $\rho_\kappa$ is well defined.
\end{lemma}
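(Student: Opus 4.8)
The claim is that $\rho_\kappa$ is a well-defined function, i.e., that the formula
\[
h \mapsto \bigl((r_{h(\kappa)}^{s}(h)-d_\kappa^{\gamma_i^{-1}s}(h))_{s\in S},\gamma_i\bigr)
\]
does not depend on the choice of the domain and range partitions realizing $h$. The plan is to reduce to the case of a single refinement step. Recall that any two dyadic-brick partitions realizing the same $h\in SV_G$ have a common refinement — namely, a partition obtained from either one by a finite sequence of ``binary subdivisions,'' where a single dyadic brick $B(\varphi)$ is replaced by the two dyadic bricks obtained by appending $0$, resp.\ $1$, to $\varphi(s)$ in a single coordinate $s$. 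So it suffices to check that the value of $\rho_\kappa(h)$ is unchanged when we replace the pair of partitions $(B(\varphi_1),\dots,B(\varphi_n))$, $(B(\psi_1),\dots,B(\psi_n))$ by the refinement in which the piece $B(\varphi_i)$ containing $\kappa$ (equivalently, the piece $B(\psi_i)$ containing $h(\kappa)$) is subdivided in one coordinate $t\in S$ on the domain side, with the matching subdivision forced on the range side.

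First I would pin down what that matching subdivision is. If the $i$-th piece of $h$ is a twist homeomorphism $h_{\psi_i}\circ\tau_{\gamma_i}\circ h_{\varphi_i}^{-1}$ from $B(\varphi_i)$ to $B(\psi_i)$, and we subdivide $B(\varphi_i)$ in coordinate $t$ — replacing $\varphi_i$ by $\varphi_i^{0}$ and $\varphi_i^{1}$, where $\varphi_i^{\epsilon}(t)=\varphi_i(t)\cdot\epsilon$ and $\varphi_i^\epsilon(s)=\varphi_i(s)$ for $s\ne t$ — then, by the definition of $\tau_{\gamma_i}$ (which permutes coordinates by $\gamma_i$), the twist homeomorphism carries $B(\varphi_i^\epsilon)$ onto $B(\psi_i^\epsilon)$, where $\psi_i^\epsilon$ is obtained from $\psi_i$ by appending $\epsilon$ in coordinate $\gamma_i t$ and leaving the other coordinates alone; and it does so via a twist homeomorphism again using $\gamma_i$. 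So on the new partitions, both new pieces still record the group element $\gamma_i$, and $\kappa$ lies in exactly one of $B(\varphi_i^0)$, $B(\varphi_i^1)$, with $h(\kappa)$ in the correspondingly-indexed $B(\psi_i^\epsilon)$.

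Now I would compare the two values of $\rho_\kappa(h)$ coordinate by coordinate. The $G$-coordinate is unchanged, as just noted. For the $\bigoplus_S\Z$-coordinate: passing from $\varphi_i$ to $\varphi_i^\epsilon$ increases $d_\kappa^{t}(h)$ by $1$ and leaves $d_\kappa^{s}(h)$ unchanged for $s\ne t$; equivalently, in the shifted indexing used in the formula, $d_\kappa^{\gamma_i^{-1}s}(h)$ increases by $1$ exactly when $\gamma_i^{-1}s=t$, i.e.\ when $s=\gamma_i t$. On the range side, passing from $\psi_i$ to $\psi_i^\epsilon$ increases $r_{h(\kappa)}^{s}(h)$ by $1$ exactly when $s=\gamma_i t$, and leaves the other coordinates unchanged. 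Hence for every $s$ the difference $r_{h(\kappa)}^{s}(h)-d_\kappa^{\gamma_i^{-1}s}(h)$ is unchanged: for $s=\gamma_i t$ both terms go up by $1$, and for $s\ne\gamma_i t$ neither term changes. Therefore $\rho_\kappa(h)$ is the same for the refined partitions as for the original ones. Since any two realizations of $h$ have a common refinement reachable from each by finitely many such one-coordinate subdivisions, $\rho_\kappa(h)$ is independent of the realization, and $\rho_\kappa$ is well defined.

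The only genuine point requiring care — the ``main obstacle,'' though it is more bookkeeping than difficulty — is the coordinate shift by $\gamma_i$: one must track that subdividing the domain brick in coordinate $t$ forces subdividing the range brick in coordinate $\gamma_i t$ (not $t$), and check that the indexing conventions in the definition of $\rho_\kappa$ make the two unit increments cancel. Once the conventions $\tau_{\gamma}(\kappa)(\gamma s)=\kappa(s)$ and the ``$\gamma_i^{-1}s$'' in the domain term are lined up, the cancellation is immediate. I would also remark that the same computation shows $\rho_\kappa$ is unchanged under subdividing a piece \emph{not} containing $\kappa$, since then none of the relevant lengths at $\kappa$ or $h(\kappa)$ change at all; this is the case $s$-independent part and needs no argument beyond the observation that $\kappa$ stays in the same (unsubdivided) piece.
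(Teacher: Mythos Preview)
Your proof is correct and follows essentially the same approach as the paper's own proof: reduce to a single binary subdivision of the domain brick containing $\kappa$, track that the twist $\gamma_i$ forces the corresponding range subdivision to occur in the shifted coordinate, and observe that the two unit increments cancel in the difference $r_{h(\kappa)}^{s}(h)-d_\kappa^{\gamma_i^{-1}s}(h)$. The only cosmetic difference is that the paper parametrizes the subdivision by the range coordinate $s_0$ (subdividing the domain in $\gamma_i^{-1}s_0$), whereas you parametrize by the domain coordinate $t$ (so the range is subdivided in $\gamma_i t$); these are of course equivalent.
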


\begin{proof}
We need to prove that the measurements $r_{h(\kappa)}^{s}(h)-d_\kappa^{\gamma_i^{-1}s}(h)$ for each $s\in S$, and the element $\gamma_i$, only depend on $h$, and not on the choice of partitions into dyadic bricks. It suffices to prove that refining the partitions does not change these measurements, and for this it is enough to consider a domain partition refinement that just partitions $B(\varphi_i)$ into two halves in some dimension; see \cite[Remark~1.1]{belk22}. More precisely, fix some $s_0\in S$, and let $\varphi_i^0,\varphi_i^1\colon S\to\{0,1\}^*$ be the functions satisfying $\varphi_i^0(s)=\varphi_i^1(s)=\varphi_i(s)$ for all $s\ne \gamma_i^{-1} s_0$, and $\varphi_i^0(\gamma_i^{-1} s_0)=\varphi_i(\gamma_i^{-1} s_0)0$ and $\varphi_i^1(\gamma_i^{-1} s_0)=\varphi_i(\gamma_i^{-1} s_0)1$. Since $\kappa\in B(\varphi_i)$, either $\kappa\in B(\varphi_i^0)$ or $\kappa\in B(\varphi_i^1)$. Viewing $h$ using this new domain partition, we have that $d_\kappa^{\gamma_i^{-1} s}(h)$ does not change unless $s=\gamma_i^{-1} s_0$, in which case it goes up by one. Now consider the resulting new range partition. The image of $B(\varphi_i^0)$ under $h$ is $B(\psi_i^0)$ or $B(\psi_i^1)$, where $\psi_i^0,\psi_i^1\colon S\to \{0,1\}^*$ are defined by $\psi_i^0(s)=\psi_i^1(s)=\psi_i(s)$ for all $s\ne s_0$, and $\psi_i^0(s_0)=\psi_i(s_0)0$ and $\psi_i^1(s_0)=\psi_i(s_0)1$ (intuitively, twisting via $\gamma_i$ takes the ``cut'' in the $\gamma_i^{-1} s$ dimension to a cut in the $s$ dimension). Now $r_{h(\kappa)}^s(h)$ is the same as before, unless $s=s_0$ in which case it goes up by one. In particular, for any $s\in S$, the difference $r_{h(\kappa)}^{s}(h)-d_\kappa^{\gamma_i^{-1} s}(h)$ is the same whether we use the original partitions or the refinements. Finally, it is obvious that the element $\gamma_i$ does not depend on the choice of partitions (to use the language of \cite[Section~2]{belk22}, $\gamma_i$ is the ``germinal twist'' of $h$ at $\kappa$), so we conclude that $\rho_\kappa$ is well defined.
\end{proof}

\begin{lemma}\label{lem:rho_prod}
For any $h,g\in SV_G$ and any $\kappa\in \Cantor^S$ we have $\rho_\kappa(hg)=\rho_{g(\kappa)}(h)\rho_\kappa(g)$.
\end{lemma}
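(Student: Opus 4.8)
The plan is to compute both sides using a single pair of compatible dyadic brick partitions, which is legitimate precisely because $\rho_\kappa$ is well defined. First I would present $g$ and $h$ with partitions refined enough that the range partition of $g$ coincides with the domain partition of $h$: any two partitions of $\Cantor^S$ into dyadic bricks admit a common refinement into dyadic bricks, and pulling such a refinement back through $g$ (respectively pushing it forward through $h$) yields a dyadic brick refinement of the domain partition of $g$ (respectively of the range partition of $h$). Let $B(\tilde\varphi)$ be the domain brick of $g$ containing $\kappa$, let $B(\tilde\psi)=g(B(\tilde\varphi))$, which is then one of the domain bricks of $h$, and let $B(\tilde\psi')=h(B(\tilde\psi))$. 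Let $\beta$ and $\gamma$ be the germinal twists of $g$ at $\kappa$ and of $h$ at $g(\kappa)$; by the proof of the preceding lemma these also label the bricks $B(\tilde\varphi)$ and $B(\tilde\psi)$ in these presentations. Finally write $d^s$, $m^s$, $r^s$ for the lengths of $\tilde\varphi(s)$, $\tilde\psi(s)$, $\tilde\psi'(s)$, so that $d^s=d_\kappa^s(g)$, $m^s=r_{g(\kappa)}^s(g)=d_{g(\kappa)}^s(h)$, and $r^s=r_{h(g(\kappa))}^s(h)$.

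Next I would identify the twist of $hg$ at $\kappa$. On $B(\tilde\varphi)$ the map $g$ equals $h_{\tilde\psi}\circ\tau_\beta\circ h_{\tilde\varphi}^{-1}$, and on $B(\tilde\psi)$ the map $h$ equals $h_{\tilde\psi'}\circ\tau_\gamma\circ h_{\tilde\psi}^{-1}$, so on $B(\tilde\varphi)$ the composite $hg$ equals $h_{\tilde\psi'}\circ\tau_\gamma\tau_\beta\circ h_{\tilde\varphi}^{-1}=h_{\tilde\psi'}\circ\tau_{\gamma\beta}\circ h_{\tilde\varphi}^{-1}$, a twist homeomorphism using $\gamma\beta$. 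Hence $hg$ is presented by the domain partition of $g$ and the range partition of $h$, its brick containing $\kappa$ is $B(\tilde\varphi)$, and its germinal twist there is $\gamma\beta$. From the definition of $\rho$ we therefore obtain
\[
\rho_\kappa(hg)=\bigl((r^s-d^{(\gamma\beta)^{-1}s})_{s\in S},\gamma\beta\bigr),\quad \rho_{g(\kappa)}(h)=\bigl((r^s-m^{\gamma^{-1}s})_{s\in S},\gamma\bigr),\quad \rho_\kappa(g)=\bigl((m^s-d^{\beta^{-1}s})_{s\in S},\beta\bigr).
\]

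Finally I would simply multiply the last two elements in $\Z\wr_S G$. With the convention $(a,\gamma)(b,\beta)=(a+\gamma\cdot b,\gamma\beta)$ where $(\gamma\cdot b)_s=b_{\gamma^{-1}s}$, the $G$-coordinate of $\rho_{g(\kappa)}(h)\rho_\kappa(g)$ is $\gamma\beta$, and its $s$-th coordinate in $\bigoplus_S\Z$ is
\[
(r^s-m^{\gamma^{-1}s})+(m^{\gamma^{-1}s}-d^{\beta^{-1}\gamma^{-1}s})=r^s-d^{(\gamma\beta)^{-1}s},
\]
the $m^{\gamma^{-1}s}$ terms cancelling and $\beta^{-1}\gamma^{-1}=(\gamma\beta)^{-1}$. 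This agrees termwise with $\rho_\kappa(hg)$, which proves the identity. Apart from this index bookkeeping, the only point I would be careful about is the very first step: that dyadic brick partitions admit common refinements and are carried to dyadic brick partitions by groupoid elements, and that germinal twists are unchanged under refinement. All of this is standard or already contained in the proof of the previous lemma, so I expect the genuine obstacle here to be nothing more than setting up compatible partitions cleanly rather than the final algebra.
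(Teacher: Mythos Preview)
Your proof is correct and follows essentially the same approach as the paper: choose compatible partitions so that the range partition of $g$ equals the domain partition of $h$, identify the twist of $hg$ on the brick containing $\kappa$ as the product of the individual twists, and then verify the identity by a direct computation in $\Z\wr_S G$. Your presentation is slightly more streamlined (you make the wreath product multiplication explicit and cancel the middle terms in one line), but the argument is the same.
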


\begin{proof}
Choose the range partition for $g$ to equal the domain partition for $h$; say in the domain of $g$ we have $B(\varphi_j)$, in the range of $g$ and domain of $h$ we have $B(\psi_j)$, and in the range of $h$ we have $B(\chi_j)$. Say $B(\varphi_i)$ contains $\kappa$, and let $\gamma_i\in G$ be such that $g$ sends $B(\varphi_i)$ to $B(\psi_i)$ via the twist homeomorphism using $\gamma_i$. Similarly choose $\delta_i\in G$ such that $h$ takes $B(\psi_i)$ to $B(\chi_i)$ via the twist homeomorphism using $\delta_i$. Note that $hg$ takes $B(\varphi_i)$ to $B(\chi_i)$ via the twist homeomorphism using $\delta_i \gamma_i$. Now we compute
\begin{align*}
\rho_\kappa(hg) &= \bigg(\big(r_{hg(\kappa)}^{s}(hg) - d_\kappa^{(\delta_i \gamma_i)^{-1} s}(hg)\big)_{s\in S},\delta_i \gamma_i\bigg)\\
&= \bigg(\big(r_{hg(\kappa)}^{s}(h)-d_\kappa^{\gamma_i^{-1}\delta_i^{-1}s}(g)\big)_{s\in S},\delta_i \gamma_i\bigg)\\
&= \bigg(\big(r_{hg(\kappa)}^{s}(h)-d_{g(\kappa)}^{\delta_i^{-1} s}(h) + r_{g(\kappa)}^{\delta_i^{-1} s}(g)-d_\kappa^{\gamma_i^{-1}\delta_i^{-1}s}(g)\big)_{s\in S}, \delta_i \gamma_i\bigg)\\
&= \bigg(\big(r_{hg(\kappa)}^{s}(h)-d_{g(\kappa)}^{\delta_i^{-1}s}(h)\big)_{s\in S},1\bigg) \bigg(\big(r_{g(\kappa)}^{\delta_i^{-1}s}(g)-d_\kappa^{\gamma_i^{-1}\delta_i^{-1}s}(g)\big)_{s\in S},\delta_i\gamma_i\bigg)\\
&= \bigg(\big(r_{hg(\kappa)}^{s}(h)-d_{g(\kappa)}^{\delta_i^{-1}s}(h)\big)_{s\in S},\delta_i\bigg) \bigg(\big(r_{g(\kappa)}^{s}(g)-d_\kappa^{\gamma_i^{-1}s}(g)\big)_{s\in S},\gamma_i\bigg)\\
&= \rho_{g(\kappa)}(h)\rho_\kappa(g) \text{.}
\end{align*}
\end{proof}

Note that we are still assuming $SV_G$ is finitely presented, hence finitely generated, so by \cite[Theorem~A]{belk22} we know $G$ is finitely generated and $S$ has finitely many $G$-orbits, which also tells us that $\Z\wr_S G$ is finitely generated (see for example \cite[Proposition~2.1]{cornulier06}). At this point we fix some finite symmetric generating set $A$ for $SV_G$. (Here a subset of a group is \emph{symmetric} if it is closed under taking inverses.)

\begin{lemma}\label{lem:good_gen_sets}
There exists a finite symmetric generating set $B$ for $\Z\wr_S G$ such that $B$ contains $\{\rho_\kappa(a)\mid \kappa\in\Cantor^S,a\in A\}$.
\end{lemma}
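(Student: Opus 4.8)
The key point is that the set $\{\rho_\kappa(a)\mid \kappa\in\Cantor^S,\ a\in A\}$ is actually finite, even though $\kappa$ ranges over all of $\Cantor^S$ and is uncountable. Once finiteness is established, we simply throw in a finite symmetric generating set for $\Z\wr_S G$ (which exists since $\Z\wr_S G$ is finitely generated, as noted above) and close up under inverses; that gives the desired $B$.

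So the plan is: fix $a\in A$. Since $a$ is a single fixed element of $SV_G$, it comes with a fixed domain partition into finitely many dyadic bricks $B(\varphi_1),\dots,B(\varphi_n)$, a fixed range partition $B(\psi_1),\dots,B(\psi_n)$, and fixed twisting elements $\gamma_1,\dots,\gamma_n\in G$. Given $\kappa\in\Cantor^S$, there is a unique $i$ with $\kappa\in B(\varphi_i)$, and by the formula defining $\rho_\kappa$ (and using well-definedness, so that we may compute with exactly these partitions), $\rho_\kappa(a) = ((r_{a(\kappa)}^{s}(a)-d_\kappa^{\gamma_i^{-1}s}(a))_{s\in S},\gamma_i)$, where $d_\kappa^{\gamma_i^{-1}s}(a)$ is the length of $\varphi_i(\gamma_i^{-1}s)$ and $r_{a(\kappa)}^{s}(a)$ is the length of $\psi_i(s)$. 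Crucially, these quantities depend only on $i$, not on the particular $\kappa\in B(\varphi_i)$: the $\N$-tuple is determined entirely by the word-lengths appearing in $\varphi_i$ and $\psi_i$, and the group element is just $\gamma_i$. Hence $\{\rho_\kappa(a)\mid \kappa\in\Cantor^S\}$ has at most $n$ elements, one for each brick in the domain partition of $a$. Taking the union over the finitely many $a\in A$, the set $\{\rho_\kappa(a)\mid \kappa\in\Cantor^S,\ a\in A\}$ is finite.

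Finally, let $B_0$ be any finite symmetric generating set for $\Z\wr_S G$ and set $B \coloneqq B_0 \cup \{\rho_\kappa(a)\mid \kappa\in\Cantor^S,\ a\in A\} \cup \{\rho_\kappa(a)^{-1}\mid \kappa\in\Cantor^S,\ a\in A\}$. By the previous paragraph this is finite, it is symmetric by construction, it generates $\Z\wr_S G$ since it contains $B_0$, and it contains $\{\rho_\kappa(a)\mid \kappa\in\Cantor^S,\ a\in A\}$ as required. There is no real obstacle here; the only thing to be careful about is invoking well-definedness of $\rho_\kappa$ so that the computation of $\rho_\kappa(a)$ using $a$'s given partitions is legitimate, and noting that because $\kappa$ only enters through \emph{which} brick it lies in, the dependence on $\kappa$ collapses to a finite choice.
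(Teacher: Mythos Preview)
Your proposal is correct and follows essentially the same approach as the paper: both argue that for a fixed $h\in SV_G$, the value $\rho_\kappa(h)$ depends only on which domain brick contains $\kappa$, so $\{\rho_\kappa(h)\mid\kappa\in\Cantor^S\}$ is finite, and then take the union over the finitely many $a\in A$. Your version is slightly more explicit about constructing $B$ by adjoining a known finite symmetric generating set and closing under inverses, but this is exactly what the paper intends.
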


\begin{proof}
Since we already know $\Z\wr_S G$ is finitely generated, all we have to do is prove that $\{\rho_\kappa(a)\mid \kappa\in\Cantor^S,a\in A\}$ is finite. Indeed, we claim that for any $h\in SV_G$ the set $\{\rho_\kappa(h)\mid \kappa\in\Cantor^S\}$ is finite. This is because, fixing a domain and range partition that encode $h$, if $\kappa$ and $\kappa'$ share a domain block then by construction we have $\rho_\kappa(h)=\rho_{\kappa'}(h)$. Since our fixed domain partition has finitely many blocks, $\{\rho_\kappa(h)\mid \kappa\in\Cantor^S\}$ is finite. Now since $A$ is finite, $\{\rho_\kappa(a)\mid \kappa\in\Cantor^S,a\in A\}$ is finite.
\end{proof}

We will use \emph{left} word metrics with respect to the finite generating sets $A$ and $B$. Now we want to show that $\rho_{\kappa_0}$ is a quasi-retraction with respect to these word metrics, where $\kappa_0$ is the point satisfying $\kappa_0(s)=\overline{0}$ for all $s\in S$.

\begin{proposition}\label{prop:qr}
The function $\rho_{\kappa_0}$ is a quasi-retraction.
\end{proposition}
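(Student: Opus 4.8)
The plan is to verify the two defining properties of a quasi-retraction: that $\rho_{\kappa_0}$ is coarse Lipschitz, and that there is a coarse Lipschitz map $\zeta\colon\Z\wr_S G\to SV_G$ with $\rho_{\kappa_0}\circ\zeta$ uniformly close to the identity. I expect both to hold in strong form: $\rho_{\kappa_0}$ will be $1$-Lipschitz, and $\zeta$ will be an actual injective group homomorphism splitting $\rho_{\kappa_0}$, so that $\rho_{\kappa_0}\circ\zeta=\id_{\Z\wr_S G}$ exactly.

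For the first property, since we use the left word metrics it suffices to bound, uniformly, the distance $\rho_{\kappa_0}$ moves under left multiplication by a single generator $a\in A$. For any $h\in SV_G$, Lemma~\ref{lem:rho_prod} gives $\rho_{\kappa_0}(ah)=\rho_{h(\kappa_0)}(a)\,\rho_{\kappa_0}(h)$, and $\rho_{h(\kappa_0)}(a)\in B$ by the choice of $B$ in Lemma~\ref{lem:good_gen_sets}; hence $\rho_{\kappa_0}(ah)$ and $\rho_{\kappa_0}(h)$ differ on the left by an element of $B$, so $d_B(\rho_{\kappa_0}(ah),\rho_{\kappa_0}(h))\le 1$. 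Chaining this estimate along a geodesic (in the left word metric) from $h$ to $h'$ yields $d_B(\rho_{\kappa_0}(h),\rho_{\kappa_0}(h'))\le d_A(h,h')$, so $\rho_{\kappa_0}$ is $1$-Lipschitz.

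The substance of the argument is the construction of the section. As already noted, $SV_G$ being finitely presented forces $G$ to be finitely generated with finitely many $G$-orbits on $S$; fix a finite generating set $\Sigma$ of $G$ and orbit representatives $s_1,\dots,s_k\in S$, and recall the standard fact (see \cite[Proposition~2.1]{cornulier06}) that $\Z\wr_S G$ is generated by the elements $(0,\sigma)$ for $\sigma\in\Sigma$ together with the elements $(e_{s_j},1)$, where $e_s\in\bigoplus_S\Z$ denotes the indicator of $s$. I will lift these to $SV_G$. For $\gamma\in G$ the twist homeomorphism $\tau_\gamma$ fixes $\kappa_0$, and reading the definition of $\rho_{\kappa_0}$ off the trivial partitions gives $\rho_{\kappa_0}(\tau_\gamma)=(0,\gamma)$. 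For each $s\in S$ I define $\delta_s\in SV_G$ to be the element obtained by cutting $\Cantor^S$ along the $s$-coordinate and reassembling so that the brick $B(\psi)$ with $\psi(s)=0$ (and $\psi$ trivial elsewhere), which contains $\kappa_0$, is carried by the canonical homeomorphism onto the brick $B(\psi')$ with $\psi'(s)=00$, the remaining domain bricks being matched with the remaining range bricks by canonical homeomorphisms (refining further if needed so the counts agree). Then $\delta_s$ fixes $\kappa_0$, has infinite order, and reading off depths ($s$-depth $1$ in the domain, $s$-depth $2$ in the range, trivial in all other coordinates, trivial twist) gives $\rho_{\kappa_0}(\delta_s)=(e_s,1)$. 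A routine computation with dyadic bricks---in essence that canonical homeomorphisms cutting distinct coordinates commute---shows $\delta_s\delta_{s'}=\delta_{s'}\delta_s$ for $s\ne s'$ and $\tau_\gamma\delta_s\tau_\gamma^{-1}=\delta_{\gamma s}$ for all $\gamma\in G$, $s\in S$. By the universal property of the semidirect product, these relations yield a group homomorphism $\iota\colon\Z\wr_S G\to SV_G$ with $\iota((f,\gamma))=\big(\prod_{s}\delta_s^{f(s)}\big)\tau_\gamma$.

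It remains to see that $\zeta\coloneqq\iota$ works. Every element of $\Gamma\coloneqq\iota(\Z\wr_S G)$ fixes $\kappa_0$, so by Lemma~\ref{lem:rho_prod} the restriction $\rho_{\kappa_0}|_\Gamma$ is a homomorphism; evaluating it on the generating elements, using $\tau_\gamma\mapsto(0,\gamma)$ and $\delta_s\mapsto(e_s,1)$, shows $\rho_{\kappa_0}\circ\iota=\id_{\Z\wr_S G}$ (in particular $\iota$ is injective). And $\iota$, being a homomorphism between finitely generated groups, sends each of the finitely many generators of $\Z\wr_S G$ to an element of bounded $A$-length, hence is Lipschitz for the left word metrics, and in particular coarse Lipschitz. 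This exhibits $\rho_{\kappa_0}$ as a quasi-retraction. The step I expect to require the most care is the treatment of the $\delta_s$: writing each down as a genuine element of $SV_G$ (the only slightly fiddly point being matching up complements of bricks after a harmless refinement), extracting $\rho_{\kappa_0}(\delta_s)=(e_s,1)$ from the definition of $\rho_{\kappa_0}$, and verifying the commutation and conjugation relations that make $\iota$ a homomorphism; with those in place the rest is formal.
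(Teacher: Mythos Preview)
Your proof is correct and follows essentially the same approach as the paper: construct a homomorphic section $\zeta$ sending $\gamma\mapsto\tau_\gamma$ and each $\Z$-generator at $s$ to an element acting only on the $s$-th Cantor coordinate, verify $\rho_{\kappa_0}\circ\zeta=\id$ on generators using that the image fixes $\kappa_0$, and deduce the Lipschitz bound on $\rho_{\kappa_0}$ from Lemmas~\ref{lem:rho_prod} and~\ref{lem:good_gen_sets}. The paper's only cosmetic difference is that it factors the construction of $\zeta$ through the chain $\Z\hookrightarrow V$, $\bigoplus_S V\hookrightarrow SV$, $V\wr_S G\hookrightarrow SV_G$, which makes the commutation of your $\delta_s$ and the conjugation relation $\tau_\gamma\delta_s\tau_\gamma^{-1}=\delta_{\gamma s}$ automatic rather than requiring a direct check.
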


\begin{proof}
First we need to construct a function $\zeta \colon \Z\wr_S G \to SV_G$. Embed $\Z$ into $V$ by sending $1$ to some element that acts by sending the dyadic brick $B(0)$ in $\Cantor$ to $B(00)$ via $0\kappa\mapsto 00\kappa$. Extend this to $\bigoplus_S \Z \to \bigoplus_S V$. Now observe that $\bigoplus_S V$ embeds into $SV$ by having the copy of $V$ corresponding to $s\in S$ act on the factor of $\Cantor$ in $\Cantor^S$ corresponding to $s$, and fixing all other coordinates. Finally, $G$ embeds into $SV_G$ as a group of global coordinate permutations; following \cite{belk22} we write $\iota_\varnothing\colon G\to SV_G$ for this embedding. Clearly this copy of $G$ normalizes the copy of $\bigoplus_S V$, and acts by permuting coordinates, so together they form a copy of $V\wr_S G$. Putting everything together we get a monomorphism $\zeta \colon \Z\wr_S G \to SV_G$.

Now we claim that $\rho_{\kappa_0}\circ\zeta$ is the identity on $\Z\wr_S G$. Note that every element in the image of $\zeta$ fixes $\kappa_0$, so restricted to this image, $\rho_{\kappa_0}$ is a homomorphism by Lemma~\ref{lem:rho_prod}. Thus, it suffices to check that $\rho_{\kappa_0}\circ\zeta$ is the identity on both the $\bigoplus_S \Z$ and $G$ factors individually, or even just on some generating sets thereof. On the $G$ factor, $\zeta(\gamma)=\iota_\varnothing(\gamma)$, which $\rho_{\kappa_0}$ sends to $\gamma$ as desired. Now let $(z_s)_{s\in S}\in \bigoplus_S \Z$, and assume that it comes from the standard generating set, so $z_s=1$ for some $s\in S$ and $z_t=0$ for all $t\ne s$. Let $\varphi\colon S\to \{0,1\}^*$ send $s$ to $0$ and all other $t$ to $\varnothing$. Restricted to $B(\varphi)$, the element $h=\zeta((z_s)_{s\in S},1_G)$ acts in the $s$ dimension by prepending a zero and acts trivially in all other dimensions. Since $B(\varphi)$ contains $\kappa_0$, and it is possible to find for $h$ a domain partition into dyadic bricks including $B(\varphi)$ and a range partition into dyadic bricks including the image of $B(\varphi)$ under $h$, this shows that $r_{\kappa_0}^s(h) - d_{\kappa_0}^s(h)=1$ and $r_{\kappa_0}^t(h) - d_{\kappa_0}^t(h)=0$ for all $t\ne s$.We conclude that $\rho_{\kappa_0}(h)=((z_s)_{s\in S},1_G)$ as desired.

Finally, we need to prove that $\rho_{\kappa_0}$ and $\zeta$ are coarse Lipschitz, with respect to the generating sets $A$ and $B$. This is immediate for $\zeta$ since it is a homomorphism, so we just need to handle $\rho_{\kappa_0}$. In fact we will show that every $\rho_\kappa$ is non-expanding, i.e., $d(\rho_{\kappa}(h),\rho_{\kappa}(h'))\le d(h,h')$ for all $h,h'\in SV_G$. It suffices to do this in the case when $h'=ah$ for $a\in A$, i.e., to prove that $d(\rho_{\kappa}(h),\rho_{\kappa}(ah))\le 1$. Indeed, $\rho_{\kappa}(ah) = \rho_{h(\kappa)}(a)\rho_\kappa(h)$ by Lemma~\ref{lem:rho_prod}, and $\rho_{h(\kappa)}(a)\in B$ by Lemma~\ref{lem:good_gen_sets}.
\end{proof}

Now we can prove the main result of this section.

\begin{proof}[Proof of Proposition~\ref{prop:forward}]
Since $\Z\wr_S G$ is a quasi-retract of $SV_G$ by Proposition~\ref{prop:qr}, we have that $\Z\wr_S G$ is finitely presented. Thus, by Citation~\ref{cit:cornulier}, the action of $G$ on $S$ is of type~(A).
\end{proof}

\begin{remark}
This also shows that if $SV_G$ is of type $\F_n$ then so is $\Z\wr_S G$, which imposes restrictions on $G$ and $S$, thanks to \cite[Theorem~B]{bartholdi15}. Namely, we must have that $G$ is of type $\F_n$, each $\Stab_G(T)$ for $T\subseteq S$ with $|T|<n$ is of type $\F_{n-|T|}$, and there are finitely many orbits of $n$-element subsets of $S$. Let us call the action of $G$ on $S$ in this case an action of \emph{type~(A$_n$)}, so type~(A$_2$) means type~(A). This is conjectured to be an if and only if in \cite[Conjecture~H]{belk22}, so it is worth remarking here that our quasi-retract argument in this section has now proven the ``only if'' direction of this conjecture (and the main result of the present paper is that the ``if'' direction is also true for $n=2$). It remains open whether the action of $G$ on $S$ being of type~(A$_n$) implies $SV_G$ is of type $\F_n$.
\end{remark}

\section{Finite presentability from actions with bad edge stabilizers}\label{sec:fp}

It is a classical fact that if a group $\Gamma$ acts cellularly and cocompactly on a simply connected CW-complex, with every vertex stabilizer finitely presented and every edge stabilizer finitely generated, then $\Gamma$ is finitely presented. See for example \cite[Proposition~3.1]{brown87}. If not every edge stabilizer in $\Gamma$ is finitely generated, then this is an impediment to obtaining finite presentability of $\Gamma$. In this section we prove a technical result that provides a way of getting around this impediment (somewhat literally).

Let us first recall how a group presentation falls out of a cellular action on a simply connected complex. In \cite{brown84} the complex can be any CW-complex and the action is allowed to stabilize cells without fixing them. For simplicity, here we will follow \cite{armstrong88} and assume the complex is simplicial and the action is \emph{rigid}, meaning the stabilizer of each cell equals its fixer. See also \cite{putman11} for a nice simplification in the case when the orbit space is $2$-connected (which is unfortunately not the case for the situations we will care about later). All this coming discussion before Proposition~\ref{prop:technical} is taken directly from \cite{armstrong88}.

\medskip

Let $K$ be a simply connected simplicial complex. View the $1$-skeleton $K^{(1)}$ as a (simplicial) graph. As in \cite{armstrong88}, following Serre \cite{serre77}, each edge $e$ can be made into a pair of directed edges by specifying which endpoint is the origin $o(e)$ and which is the terminus $t(e)$. Write $\overline{e}$ for $e$ with the opposite orientation, so $o(\overline{e})=t(e)$ and $t(\overline{e})=o(e)$, and both $e$ and $\overline{e}$ are viewed as (directed) edges in $K^{(1)}$. Let $\Gamma$ be a group with an orientation preserving action on $K$, i.e., for any edge $e$ and any $\gamma\in \Gamma$ we have $o(\gamma.e)=\gamma.o(e)$ and $t(\gamma.e)=\gamma.t(e)$. Let $M$ be a maximal tree in the orbit space $\Gamma\backslash K^{(1)}$, and let $T$ be a choice of lift of $M$ to a subtree of $K^{(1)}$. Note that the vertices of $T$ form a set of representatives of the $\Gamma$-orbits of vertices of $K$. For each edge $f$ of $(\Gamma\backslash K^{(1)})\setminus M$, choose a lift $\widetilde{f}$ of $f$ in $K^{(1)}$ such that one endpoint of $\widetilde{f}$ lies in $T$, call it the origin $o(\widetilde{f})$. The other endpoint of $\widetilde{f}$, its terminus $t(\widetilde{f})$, necessarily lies outside $T$. Let $z_f$ be the vertex of $T$ that shares a $\Gamma$-orbit with $t(\widetilde{f})$, and choose some $\gamma_f\in \Gamma$ such that $\gamma_f.z_f=t(\widetilde{f})$. Note that for each $f$ we are free to choose $\gamma_f$ to be any element we like, as long as it satisfies $\gamma_f.z_f=t(\widetilde{f})$. We will also always choose the lift $\widetilde{(\overline{f})}$ of $\overline{f}$ to have $z_f$ specifically as its origin, so consequently $z_{\overline{f}}$ is the origin of $\widetilde{f}$, and we can take $\gamma_{\overline{f}}=\gamma_f^{-1}$. For each edge $f$ of $M$, set $\gamma_f=1$.

Let us say \emph{triangle} for $2$-simplex. From each orbit of triangles in $K^{(2)}$, we can choose one that has at least one vertex in $T$. Given a triangle $\Delta$ in $K^{(2)}$ with a vertex $v$ in $T$, view the boundary of $\Delta$ as an edge path (of length three) from $v$ to $v$, say $e_1,e_2,e_3$. (Up to possibly switching the roles of some $e_i$ and $\overline{e}_i$, we can assume that this is a directed cycle, i.e., $t(e_i)=o(e_{i+1})$ for all $i$ mod $3$.) For each $i=1,2,3$, let $f_i$ be the image of $e_i$ in $\Gamma\backslash K^{(1)}$, and let $\widetilde{f}_i$ be the lift as above. If $f_i$ is in $M$ then $\widetilde{f}_i$ is in $T$, and if not then the origin $o(\widetilde{f}_i)$ lies in $T$ but not the terminus. Since $e_1$ and $\widetilde{f}_1$ share a $\Gamma$-orbit, and both of their origins lie in $T$, they must have the same origin, namely $v$. Since they share an orbit and have the same origin $v$, and the action is orientation preserving, we can choose $a_1\in \Gamma_v\coloneqq \Stab_\Gamma(v)$ taking $\widetilde{f}_1$ to $e_1$. Now observe that $a_1 \gamma_{f_1}$ takes $z_{f_1}$ to the terminus of $e_1$, which is the origin of $e_2$. Since the origin of $\widetilde{f}_2$ is the vertex of $T$ sharing an orbit with the origin of $e_2$, the origin of $\widetilde{f}_2$ is $z_{f_1}$. Now similar to before, we can choose $a_2\in \Gamma_{z_{f_1}}$ such that $a_1 \gamma_{f_1} a_2$ takes $\widetilde{f}_2$ to $e_2$, and so $a_1 \gamma_{f_1} a_2 \gamma_{f_2}$ takes $z_{f_2}$ to the terminus of $e_2$, which is the origin of $e_3$. Finally, we do this trick again and get $a_3\in \Gamma_{z_{f_2}}$ such that $a_1 \gamma_{f_1} a_2 \gamma_{f_2} a_3 \gamma_{f_3}$ takes $z_{f_3}$ to the terminus of $e_3$, which is $v$. But $z_{f_3}$ is a vertex in $T$ sharing an orbit with $v$, hence must be $v$. We conclude that $a_1 \gamma_{f_1} a_2 \gamma_{f_2} a_3 \gamma_{f_3}$ lies in $\Gamma_v$, write $a_1 \gamma_{f_1} a_2 \gamma_{f_2} a_3 \gamma_{f_3}=a_v$. Now we have $a_v^{-1}a_1 \gamma_{f_1} a_2 \gamma_{f_2} a_3 \gamma_{f_3} = 1$, and up to forgetting how $a_1$ arose in $\Gamma_v$ specifically taking $\widetilde{f}_1$ to $e_1$ we can rename $a_v^{-1}a_1$ as the new $a_1$. Thus we have a relation
\[
a_1 \gamma_{f_1} a_2 \gamma_{f_2} a_3 \gamma_{f_3} = 1\text{,}
\]
with $a_1$, $a_2$, and $a_3$ coming from various vertex stabilizers, and we call this relation $r_\Delta$. We can phrase $r_\Delta$ as saying that $a_1 \gamma_{f_1} a_2 \gamma_{f_2} a_3 = \gamma_{f_3}^{-1}$, so in particular if we have already chosen $\gamma_{f_1}$ and $\gamma_{f_2}$, then we can choose $\gamma_{f_3}$ in terms of $\gamma_{f_1}$, $\gamma_{f_2}$, and elements of the various vertex stabilizers.

\medskip

With this setup, we get a presentation for $\Gamma$ (see the Presentation Theorem at the end of Section~2 of \cite{armstrong88}). The set of generators is the (disjoint) union of the vertex stabilizers $\Gamma_v$ for $v$ a vertex of $T$, together with a generator $\lambda_f$ for each $f$ an edge of $\Gamma\backslash K^{(1)}$. If an element $g$ lies in more than one $\Gamma_v$, write $g_v$ when viewing it specifically as an element of $\Gamma_v$. The defining relations consist of the relations in each $\Gamma_v$, called \emph{vertex relations}, the set of relations
\[
r_\Delta^\lambda \quad\colon\quad a_1 \lambda_{f_1} a_2 \lambda_{f_2} a_3 \lambda_{f_3} = 1
\]
obtained by replacing each $\gamma$ with $\lambda$ in $r_\Delta$, called \emph{triangle relations}, the relations $\lambda_f=1$ for all $f$ in $M$, called \emph{tree relations}, and the relations
\[
R(f,g) \quad\colon\quad (g_{o(\widetilde{f})})^{\lambda_f} = (g^{\gamma_f})_{z_f}
\]
for all $f$ in $(\Gamma\backslash K^{(1)})\setminus M$ and $g\in \Gamma_{\widetilde{f}}$, called \emph{edge relations}. Here as usual $x^y\coloneqq y^{-1}xy$. (To be more clear about $\gamma$ versus $\lambda$, the $\gamma_f$ are elements of the group $\Gamma$, whereas the $\lambda_f$ are formal symbols used as generators in the abstract group presentation. The tree relations $\lambda_f=1$ essentially account for the fact that we chose $\gamma_f=1$ whenever $f$ in the tree $M$.)

For a fixed $f$, write $R(f,*)$ for the family of relations $R(f,g)$ for $g\in \Gamma_{\widetilde{f}}$. Note that all the $R(f,g)$ are consequences of those for which $g$ comes from a generating set of $\Gamma_{\widetilde{f}}$. At this point it is clear that if each $\Gamma_v$ is finitely presented and each $\Gamma_{\widetilde{f}}$ is finitely generated, and the action is cocompact, then $\Gamma$ is finitely presented. Now we will inspect the case when not every $\Gamma_{\widetilde{f}}$ is finitely generated.

\begin{proposition}\label{prop:technical}
Let $K$ be a simply connected simplicial complex and $\Gamma$ a group with an orientation preserving, cocompact action on $K$, such that every vertex stabilizer $\Gamma_v$ is finitely presented. Suppose that for each edge $e=\{v,w\}$, there exists an edge path $e_1,\dots,e_n$ from $v$ to $w$ such that each stabilizer $\Gamma_{e_i}$ is finitely generated, and the subgroup $\bigcap\limits_{i=1}^n \Gamma_{e_i}$ of $\Gamma_e$ has finite index. Then $\Gamma$ is finitely presented.
\end{proposition}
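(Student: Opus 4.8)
The plan is to work with the presentation of $\Gamma$ recalled above (following \cite{armstrong88}) and show that, under the hypotheses, all but finitely many of its defining relations are redundant. By cocompactness there are finitely many vertex orbits and finitely many edge orbits, so the generating set $\bigl(\bigsqcup_v \Gamma_v\bigr) \sqcup \{\lambda_f\}$ is finite; the vertex relations are finitely many since each $\Gamma_v$ is finitely presented; and the triangle and tree relations are finitely many. The only possible source of infinitely many relations is the family $R(f,*)$ of edge relations for the finitely many edge orbits $f$ -- call these \emph{bad} -- with $\Gamma_{\widetilde f}$ not finitely generated. So it suffices to prove that, for each bad $f$, the family $R(f,*)$ is a consequence of the remaining relations together with finitely many of its own members.

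Fix a bad orbit $f$ with chosen lift $\widetilde f$, write $v = o(\widetilde f) \in T$ and $w = t(\widetilde f)$, and recall $\Gamma_{\widetilde f} = \Gamma_v \cap \Gamma_w$ (rigidity). Let $e_1,\dots,e_n$ be an edge path from $v$ to $w$ as in the hypothesis, and put $H = \bigcap_{i=1}^n \Gamma_{e_i}$, a finite-index subgroup of $\Gamma_{\widetilde f}$ that fixes every vertex on the path, in particular $v$ and $w$. Note the $e_i$ lie in orbits distinct from the bad ones, their stabilizers being finitely generated, so across all the finitely many bad orbits only finitely many good-edge orbits are used. A first observation: using the vertex relations, $R(f,g)$ and $R(f,g')$ together imply $R(f,gg')$ -- rewrite $(gg')_v = g_v g'_v$ before conjugating by $\lambda_f$, and $(g^{\gamma_f})_{z_f}(g'^{\gamma_f})_{z_f} = ((gg')^{\gamma_f})_{z_f}$ afterwards -- and similarly $R(f,g)$ implies $R(f,g^{-1})$, so the set of $g \in \Gamma_{\widetilde f}$ for which $R(f,g)$ follows from a fixed set of relations is a subgroup of $\Gamma_{\widetilde f}$. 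Consequently, if we retain $R(f,g_1),\dots,R(f,g_m)$ for a finite transversal of $H$ in $\Gamma_{\widetilde f}$, it remains only to show that $R(f,h)$ for $h \in H$ follows from the finitely many relations we have already committed to: the vertex, triangle and tree relations, together with the edge relations $R(e_i,*)$ associated to the good edges $e_1,\dots,e_n$ (finitely many, each $\Gamma_{e_i}$ being finitely generated, with the tree-edge cases interpreted as identifications of edge stabilizers into the adjacent vertex stabilizers).

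The key step is this. Because $K$ is simply connected its $2$-skeleton is too, so the closed edge loop at $v$ running along $e_1,\dots,e_n$ to $w$ and then back along $\widetilde f$ is a product of conjugates of boundaries of $2$-simplices of $K$. Feeding this null-homotopy through the rewriting procedure that produces the presentation of \cite{armstrong88} -- first expressing each $e_i$, and each intermediate vertex, in terms of the chosen lifts of its orbit and of tree vertices exactly as in the derivation of the triangle relations $r_\Delta$, then reading the loop off -- shows that, modulo the vertex, triangle and tree relations, the generator $\lambda_f$ equals an explicit word $U$ whose letters are the generators $\lambda_{e_i}$ of the good edges and elements of the vertex stabilizers of the path vertices. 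Now take $h \in H$. Since $h$ lies in every $\Gamma_{e_i}$ and fixes every path vertex, the good edge relations $R(e_i,h)$ and the vertex relations let us conjugate $h_v$ letter by letter along $U$, transporting it from $\Gamma_v$ to $\Gamma_w$; one checks the result is $(h^{\gamma_f})_{z_f}$, using $w = \gamma_f.z_f$ and tracking the twists $\gamma_{e_i}$ picked up along the way, whose total effect on $h$ is conjugation by the element of $\Gamma$ represented by $U$, namely $\gamma_f$. Hence $(h_v)^U = (h^{\gamma_f})_{z_f}$ is a consequence of our finite relation set, and since $\lambda_f = U$ modulo that set, so is $R(f,h)$. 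Running this over the finitely many bad orbits $f$ yields a finite presentation of $\Gamma$.

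I expect the main obstacle to be making the key step of the third paragraph precise: carrying the null-homotopy of the loop $e_1\cdots e_n\,\overline{\widetilde f}$ through the bookkeeping of \cite{armstrong88} -- the normalization of each path edge as a translate of a chosen lift, the insertion of vertex-stabilizer elements at the intermediate vertices, and the choices of the $\gamma$'s -- so that one genuinely obtains the identity $\lambda_f = U$ modulo the finite relation set with $U$ built only from good-edge generators and vertex-stabilizer elements, and then verifying that conjugating $h \in H$ along $U$ via the good edge relations reproduces exactly the right-hand side of $R(f,h)$. These are the same elementary moves used to derive the presentation, but assembling them into the stated identity, uniformly over the finitely many bad orbits, is where the care lies.
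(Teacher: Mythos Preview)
Your argument is correct and follows essentially the same route as the paper: reduce to showing that each bad family $R(f,*)$ is a consequence of the finite relation set plus finitely many of its own members, use the finite-index subgroup $H=\bigcap_i\Gamma_{e_i}$ and coset representatives to handle the latter, and for $h\in H$ rewrite $R(f,h)$ via an identity expressing $\lambda_f$ as a word in good-edge generators and vertex-stabilizer elements.

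The one meaningful difference is in how that identity $\lambda_f=U$ is obtained, and this is precisely where your self-identified ``main obstacle'' lies. The paper sidesteps it by exploiting the freedom, emphasized in the setup, to \emph{choose} $\gamma_f$ to be any element sending $z_f$ to $t(\widetilde f)$. Running the same bookkeeping that produced the triangle relations along the directed cycle $e_1,\dots,e_n,\overline{\widetilde f}$ yields, purely in $\Gamma$, an identity $a_1\gamma_{f_1}a_2\gamma_{f_2}\cdots a_n\gamma_{f_n}a_{n+1}\gamma_{\overline f}=1$; one then \emph{defines} $\gamma_f$ to be $a_1\gamma_{f_1}\cdots a_n\gamma_{f_n}a_{n+1}$. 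With this choice made at the outset, the corresponding $\lambda$-identity $\lambda_f=a_1\lambda_{f_1}\cdots a_n\lambda_{f_n}a_{n+1}$ follows from the triangle relations (the cycle bounds in $K^{(2)}$), and substituting directly into $R(f,h)$ exhibits it as a consequence of vertex relations and the $R(f_i,*)$. This buys you exactly what you were worried about: there is no need to ``carry a null-homotopy through the bookkeeping'' after the fact, because the bookkeeping was rigged in advance to make the identity hold on the nose. Your approach works too, but the paper's choice of $\gamma_f$ is the cleaner way to package it.

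One small inaccuracy: you write that the generating set $\bigl(\bigsqcup_v\Gamma_v\bigr)\sqcup\{\lambda_f\}$ is finite. It is not; rather, each $\Gamma_v$ is finitely presented, so one passes to finite generating sets for the finitely many $\Gamma_v$. This does not affect the argument.
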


\begin{proof}
Let $f$ be an edge in $\Gamma\backslash K^{(1)}$ such that $\Gamma_{\widetilde{f}}$ is not finitely generated. Let $e_1,\dots,e_n$ be an edge path in $K^{(1)}$ from $v=o(\widetilde{f})$ to $w=t(\widetilde{f})$ such that each $\Gamma_{e_i}$ is finitely generated, and $\bigcap\limits_{i=1}^n \Gamma_{e_i}$ has finite index in $\Gamma_{\widetilde{f}}$. Let $f_i$ be the image of $e_i$ in $\Gamma\backslash K^{(1)}$, with $\widetilde{f}_i$ the chosen lift to $K^{(1)}$ (so $o(\widetilde{f}_i)$ lies in $T$). We now claim that the edge relations $R(f,*)$ are all consequences of the edge relations $R(f_i,*)$ together with the (finitely many) non-edge relations. By extrapolating the triangle relations, and noting that the path $e_1,\dots,e_n,\overline{f}$ is a directed cycle, we see that we are free to choose $\gamma_f$ (which equals $\gamma_{\overline{f}}^{-1}$) so that
\[
\gamma_f=a_1\gamma_{f_1}a_2\gamma_{f_2}\cdots a_n \gamma_{f_n} a_{n+1} \text{,}
\]
for some $a_1,\dots,a_{n+1}$ coming from the various stabilizers of vertices of $T$. From the triangle relations, we also have
\[
\lambda_f=a_1\lambda_{f_1}a_2\lambda_{f_2}\cdots a_n \lambda_{f_n} a_{n+1} \text{.}
\]
Now for any $g$ in the finite index subgroup of $\Gamma_{\widetilde{f}}$ fixing the path $e_1,\dots,e_n$, the relation $R(f,g)$, which is $(g_v)^{\lambda_f}=(g^{\gamma_f})_{z_f}$, can be rewritten as
\[
(g_v)^{a_1\lambda_{f_1}a_2\lambda_{f_2}\cdots a_n \lambda_{f_n} a_{n+1}}=(g^{a_1\gamma_{f_1}a_2\gamma_{f_2}\cdots a_n \gamma_{f_n} a_{n+1}})_{z_f} \text{.}
\]
This shows that $R(f,g)$ is a consequence of vertex relations and edge relations from the $R(f_i,*)$ (and tree relations if needed). Since this holds for all $g$ coming from a finite index subgroup of $\Gamma_{\widetilde{f}}$, we can choose some (finite) set of representatives in $\Gamma_{\widetilde{f}}$ of the cosets of this subgroup, and conclude that all the $R(f,g)$ are consequences of the vertex relations, triangle relations, tree relations, edge relations from the $R(f_i,*)$, and finitely many edge relations from $R(f,*)$. Doing this for every such $f$ proves that all the defining relations are consequences of a finite set of relations, i.e., $\Gamma$ is finitely presented.
\end{proof}

It seems likely that some sort of higher dimensional analog of this result is also true, and could be used for deducing that groups admitting certain actions are of type $\F_n$, perhaps using spectral sequence techniques. However, some results in the next section, specifically Lemma~\ref{lem:long_edge_stabs}, do not have clear higher dimensional analogs, and so we will not pursue this further here.

\section{Stein complexes and subcomplexes}\label{sec:stein}

In this section we recall the construction of the Stein complex $X$ for $SV_G$, and establish some important subcomplexes of $X$. Given two elements $h$ and $h'$ of $S\mathcal{V}_G$, say $h$ has rank $n$ and corank $m$, and $h'$ has rank $n'$ and corank $m'$, define the \emph{direct sum} $h\oplus h'$ to be the element of $S\mathcal{V}_G$ with rank $n+n'$ and corank $m+m'$ given by sending the first $m$ cubes of $\Cantor^S(m+m')$ to the first $n$ cubes of $\Cantor^S(n+n')$ via $h$, and sending the last $m'$ cubes of $\Cantor^S(m+m')$ to the last $n'$ cubes of $\Cantor^S(n+n')$ via $h'$.

Given a permutation $\sigma$ in the symmetric group $\Sigma_m$, the \emph{permutation homeomorphism} $p_\sigma$ of $\Cantor^S(m)$ is defined by sending the $i$th Cantor cube to the $\sigma(i)$th Cantor cube via the canonical homeomorphism. By a \emph{twisted permutation} we will mean any composition of a permutation homeomorphism with a direct sum of twist homeomorphisms. It is easy to see that the twisted permutations with rank (and corank) $m$ form a group isomorphic to $G\wr_m \Sigma_m \coloneqq \bigoplus_{i=1}^m G \rtimes \Sigma_m$; we denote this group by $\mathcal{G}(m)$.

Next, define a \emph{multicolored tree} recursively, by saying that the identity on $\Cantor^S$ and all simple splits $x_s$ are multicolored trees, and if $f_1$ and $f_2$ are multicolored trees and $\sigma$ is a permutation, then $f=p_\sigma(f_1\oplus f_2)x_s$ is a multicolored tree. The name comes from viewing $S$ as a set of colors, and $x_s$ as a caret with color $s$. Note that multicolored trees have corank $1$, and can have any rank. A \emph{multicolored forest} is any element of $S\mathcal{V}$ of the form $p_\sigma(f_1\oplus\cdots\oplus f_m)$ for multicolored trees $f_i$ and $\sigma$ a permutation. Note that the set of all multicolored forests is the smallest subset of $S\mathcal{V}$ that contains all permutations and simple splits, and is closed under direct sums and compositions.

\medskip

Let $P$ be the set of equivalence classes $[h]=\mathcal{G}(m)h$, where $h$ is an element of $S\mathcal{V}_G$ with rank $m$. That is, $P$ is the set of elements of the twisted Brin--Thompson groupoid, up to postcomposing by twisted permutations. Let $P_1$ be the subset of elements with corank $1$. We have a (right) action of $SV_G$ on $P_1$ given by precomposing. The set $P$ has a partial order $\le$, given by
\[
[h]\le [fh]
\]
whenever $f$ is a multicolored forest (whose corank equals the rank of $h$). Note that $P_1$ is a subposet of $P$. By \cite[Lemma~5.1 and Proposition~5.2]{belk22}, $\le$ is a partial order and the subposet $P_1$ is directed, so the geometric realization $|P_1|$ is contractible. One key to $\le$ being a partial order is the following result, which will also be important here.

\begin{cit}\cite[Lemma~2.3]{belk22}\label{lem:swap}
If $f$ is a multicolored forest and $g$ is a twisted permutation such that the composition $fg$ is defined, then $fg = g'f'$ for some twisted permutation $g'$ and some multicolored forest $f'$. Moreover, if $f$ is a multicolored tree then $g'$ is a direct sum of copies of $g$.
\end{cit}

In \cite{belk22}, a certain subcomplex $X$ of $|P_1|$ called the Stein complex was constructed, using a notion of ``elementary'' multicolored forests, to provide a smaller and more manageable, but still contractible, complex for $SV_G$ to act on. Our goal now is to find an even smaller subcomplex that is even more manageable, and is still simply connected (if not contractible). This will be enough to lead to finite presentability of $SV_G$, with weaker hypotheses.

\begin{definition}[Elementary, spectrum]
We will recall what it means for a multicolored tree to be \emph{elementary}, and at the same time recall the notion of \emph{spectrum}, recursively. First, the identity on $\Cantor^S$ is declared to be elementary with empty spectrum. Given elementary multicolored trees $f_1$ and $f_2$ with spectra $S_1,S_2\subseteq S$, if $s\in S\setminus(S_1\cup S_2)$ then we declare that the multicolored tree $p_\sigma(f_1\oplus f_2)x_s$ (for any $\sigma$) is elementary and has spectrum $S_1\cup S_2\cup\{s\}$.
\end{definition}

Intuitively, the spectrum of a multicolored tree is the set of colors ``involved'' in the tree, and a tree is elementary if no color occurs more than once going from the root to a leaf. Note that it is fine if $S_1$ and $S_2$ intersect. We extend these definitions to multicolored forests in the obvious way: a multicolored forest is \emph{elementary} if it is a direct sum of elementary multicolored trees, composed with a permutation homeomorphism, and its \emph{spectrum} is the union of the spectra of these elementary multicolored trees.

\medskip

Now we introduce a new, crucial definition.

\begin{definition}[$k$-elementary]
Call an elementary multicolored forest \emph{$k$-elementary} if its spectrum has size at most $k$.
\end{definition}

For example, the only $1$-elementary multicolored forests are the direct sums of copies of identity elements and copies of some simple split $x_s$, composed with a permutation homeomorphism. As another example, the multicolored tree $((x_u\oplus x_t))x_s$ is $3$-elementary if $s$, $t$, and $u$ are distinct, $2$-elementary if $s\ne t=u$, and non-elementary if $s\in\{t,u\}$.

For $[h]\preceq [fh]$ in $P_1$, write $[h]\preceq_k [fh]$ whenever the multicolored forest $f$ is $k$-elementary. Given a simplex in $|P_1|$, i.e., a chain $v_0<v_1<\cdots<v_k$ of elements of $P_1$, call the simplex \emph{$k$-elementary} if $v_0\preceq_k v_k$ (and hence $v_i\preceq_k v_j$ for all $i<j$). These simplices form a subcomplex of $|P_1|$, which we denote by $X(k)$, and call the \emph{$k$-elementary Stein complex} for $SV_G$. The \emph{Stein complex} $X$ is $X=X(\infty)$, i.e., the complex dictated by using all elementary multicolored forests rather than restricting to just the $k$-elementary ones for some $k$.

Let $\phi \colon X^{(0)} \to \N$ be the function sending $v$ to its rank, and also denote by $\phi$ the map $X\to\R$ given by affinely extending $\phi$ to each simplex. For each $m\in\N$ let $X_m$ be the full subcomplex of $X$ spanned by vertices with rank at most $m$. (In \cite{belk22} $X_m$ is defined to be $\phi^{-1}([1,m])$, but this is homotopy equivalent to what we are calling $X_m$, thanks to standard discrete Morse theory, for example \cite[Lemma~2.5]{bestvina97}. We do things this way since it will be convenient for us to have $X_m$ be a subcomplex.) Call $X_m$ the \emph{truncation} of $X$ at $m$. Since $\phi$ is invariant under the action of $SV_G$ on $X$, each $X_m$ is stabilized by $SV_G$. Write $X_m(k)$ for the intersection
\[
X_m(k) \coloneqq X_m \cap X(k) \text{.}
\]
By \cite[Proposition~7.9]{belk22}, which establishes connectivity properties of the descending links, together with standard discrete Morse theory, e.g., \cite[Corollary~2.6]{bestvina97}, one can compute that for any $n$, the complex $X_m$ is $(n-1)$-connected for $m$ sufficiently large. For example, $X_m$ is simply connected for all $m\ge 21$, since \cite[Proposition~7.9]{belk22} says the descending link of any vertex with rank greater than $m$ is simply connected as soon as $\lfloor((m+1)/2-2)/3\rfloor-2\ge 1$ and $\log_2((m+1)/2)-2\ge 1$, which is equivalent to $m\ge 21$.

\begin{proposition}\label{prop:hi_conn}
Let $k\ge n$. The complex $X(k)$ is $(n-1)$-connected, and moreover for any $m\in\N$, if $m$ is large enough that the truncation $X_m$ is $(n-1)$-connected then $X_m(k)$ is also $(n-1)$-connected. For example, $X_{21}(2)$ is simply connected.
\end{proposition}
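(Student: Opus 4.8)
The plan is to prove $X(k)$ is $(n-1)$-connected for $k\ge n$ by comparing $X(k)$ to the full Stein complex $X=X(\infty)$, which is contractible, using a Morse-theoretic collapse that pushes simplices of $X$ down into $X(k)$. Concretely, I would define a height function on $X$ that detects how ``non-$k$-elementary'' a simplex is. For a vertex $[h]\in X^{(0)}$, there is no canonical choice, so I expect the right move is to work relative to $X(k)$: show that $X(k)$ is obtained from (a complex homotopy equivalent to) $X$ by coning off, or alternatively that the inclusion $X(k)\hookrightarrow X$ induces isomorphisms on $\pi_i$ for $i\le n-1$ by a link argument. The cleanest route is likely the second: given a simplex $\sigma=(v_0<\cdots<v_\ell)$ of $X$ that is not in $X(k)$, so $v_0\preceq v_\ell$ via an elementary forest $f$ whose spectrum has size $>k$, I want to show that $\sigma$ can be homotoped off using simplices that are ``more $k$-elementary.'' The key structural input is the recursive definition of elementary forests and Citation~\ref{lem:swap}, which lets one reorganize a forest $f$ as a composition $f = f' f''$ where $f''$ is $k$-elementary and $f'$ handles the rest.

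The main steps, in order, would be: (1) Fix $k\ge n$. Recall $X$ is contractible (this is \cite[Lemma~5.1, Proposition~5.2]{belk22} applied to $|P_1|$ together with the Stein deformation). (2) Set up a ``relative descending link'' argument: order the vertices of $X\setminus X(k)$ by rank (via $\phi$), and for a vertex $v=[fh]$ of maximal rank not reachable from a lower vertex by a $k$-elementary forest, analyze the full subcomplex of $X$ consisting of simplices containing $v$ all of whose other vertices are $\preceq_k$-below $v$ or otherwise ``simpler.'' (3) Show this relevant link is highly connected — this is where the connectivity of descending links from \cite[Proposition~7.9]{belk22} gets reused, but now one must check that the $k$-elementary restriction does not destroy the connectivity bound, which works precisely because an elementary caret only introduces one new color, so building up a $k$-elementary subtree costs at most $k$ colors and we have budget $k\ge n$. (4) Conclude by Morse theory (e.g., \cite[Corollary~2.6]{bestvina97} or \cite[Lemma~2.5]{bestvina97}) that attaching the non-$k$-elementary part to $X(k)$ does not change $\pi_i$ for $i\le n-1$, so $X(k)$ is $(n-1)$-connected. (5) For the truncated statement, intersect everything with $X_m$: the same Morse function restricts, and since $X_m$ is assumed $(n-1)$-connected, the identical link computation (the descending links in $X_m$ differ from those in $X$ only in that ranks are capped, which is exactly the content of \cite[Proposition~7.9]{belk22} being a statement about descending links) gives that $X_m(k)=X_m\cap X(k)$ is $(n-1)$-connected. (6) Specialize: $X_{21}$ is simply connected by the computation recalled before the proposition, and $2\ge 2$, so $X_{21}(2)$ is simply connected.

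The hard part will be step (3): verifying that restricting to $k$-elementary forests preserves the high-connectivity of the descending/relative links. The descending link arguments in \cite{belk22} are phrased in terms of arbitrary elementary forests, and one needs to check that the sub-poset of $k$-elementary expansions of a given vertex is still highly connected — morally because an expansion that uses more than $k$ colors can always be ``factored through'' expansions using at most $k$ colors, via the associativity/commutativity encoded in Citation~\ref{lem:swap} and the fact that each caret contributes exactly one color to the spectrum. I would expect to need a separate small lemma isolating this combinatorial fact about spectra of elementary forests (something like: any elementary forest factors as a composition of $1$-elementary forests, and $k$-elementarity of the composite is controlled by how the spectra overlap), and then feed that into the Morse-theoretic machinery already set up in \cite{belk22,bestvina97}. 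I also anticipate a minor technical point in making the Morse function $SV_G$-invariant (so that $X(k)$ remains a genuine subcomplex compatible with the group action), which should follow since $\phi$ and the notion of $k$-elementary are both manifestly $SV_G$-invariant.
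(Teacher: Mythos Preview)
Your high-level strategy---compare $X(k)$ to the contractible $X$ and show the difference is sufficiently connected---is the same as the paper's, but the implementation you sketch has a genuine gap. The Morse-theoretic framing does not get off the ground: as you yourself note, there is no natural height function on \emph{vertices} detecting non-$k$-elementarity, because $k$-elementarity is a property of edges (pairs $v\preceq w$), not of vertices. Your step~(3) then appeals to \cite[Proposition~7.9]{belk22}, but that result computes connectivity of descending links with respect to the \emph{rank} function $\phi$; it is what one uses to pass from $X$ to $X_m$, and it says nothing about restricting spectra. The ``sub-poset of $k$-elementary expansions of a given vertex'' is not the object whose connectivity is needed, and the factoring observation (every elementary forest is a composite of $1$-elementary ones) does not by itself yield any connectivity statement.

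What the paper actually does avoids Morse theory on vertices entirely. It builds $X$ from $X(k)$ by gluing in the closed intervals $|[v,w]|$ with $v\preceq w$ but $v\not\preceq_k w$, in order of increasing rank difference; each such $|[v,w]|$ is attached along $|[v,w)\cup(v,w]|$, the suspension of the open interval $|(v,w)|$. So the task reduces to showing $|(v,w)|$ is $(k-2)$-connected whenever the forest from $v$ to $w$ has spectrum of size $>k$. This is done by a Nerve Lemma: cover $|(v,w)|$ by the stars of the ``one-caret'' vertices $u_p(s)$ with $v\preceq_1 u_p(s)$, and observe that any $\ell\le k$ of these stars intersect in the (contractible) star of their join, because the join of $\ell$ elements each with spectrum of size~$1$ has spectrum of size at most~$\ell\le k$, hence still lies strictly below $w$. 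Thus the $(k-1)$-skeleton of the nerve is a full simplex skeleton, giving $(k-2)$-connectivity. The truncated version follows identically since all vertices of $(v,w)$ have rank below that of $w$. The crucial combinatorial input you are missing is not a factorisation statement but the fact (from the proof of \cite[Proposition~5.3]{belk22}) that the spectrum of a join equals the union of the spectra, which is what makes the nerve argument work.
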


\begin{proof}
We will begin by mimicking part of the proof of \cite[Proposition~5.6]{belk22}. By that proposition, the Stein complex $X$ is contractible. We can build up from $X(k)$ to $X$ by attaching the geometric realizations of closed poset intervals of the form $[v,w]$ for $v<w$ satisfying $v\preceq w$ but $v\not\preceq_k w$. We do so in increasing order of the rank of $w$ minus the rank of $v$, so at the point when we attach $|[v,w]|$, we do so along an intersection equal to $|[v,w)\cup(v,w]|$. This is the suspension of $|(v,w)|$. In order to conclude that $X(k)$ is $(n-1)$-connected, it therefore suffices to prove that the suspension of $|(v,w)|$ is $(n-1)$-connected, i.e., that $|(v,w)|$ is $(n-2)$-connected. Since $k\ge n$, it suffices to show that $|(v,w)|$ is $(k-2)$-connected.

Say $v=[h]$ and $w=[fh]$ for $f$ an elementary (and not $k$-elementary) multicolored forest. Let $r$ be the rank of $h$, which is also the corank of $f$. For any $u\in (v,w)$, since $v<u$ we can write $u=[f'h]$ for some non-trivial multicolored forest $f'$ (here we have used Lemma~\ref{lem:swap}), so in particular there exists a simple split $x_s$ such that $v<[(\id_p \oplus x_s \oplus \id_{r-p-1}) h]\le u<w$ for some $0\le p\le r-1$. Here $\id_p$ is the identity element with rank (and corank) $p$. In particular, every simplex of $|(v,w)|$ lies in the star of a vertex of $|(v,w)|$ of the form $u_p(s)\coloneqq [(\id_p \oplus x_s \oplus \id_{r-p-1}) h]$. The complex $|(v,w)|$ is therefore covered by these stars, and we can use a Nerve Lemma, for example \cite[Lemma~1.2]{bjoerner94}, to inspect its connectivity. Note that the distinguishing feature of these vertices $u_p(s)$ is that $v\preceq_1 u_p(s)$.

To simplify notation, we can assume without loss of generality that $h=\id_r$, so all the vertices in $|(v,w)|$ are represented by (elementary) multicolored forests. By \cite[Proposition~5.3]{belk22}, any two elements of the poset $P$ represented by (elementary) multicolored forests $[f_1],[f_2]$ with the same corank have a unique least upper bound, their \emph{join} $[f_1]\vee[f_2]$, which is again represented by an (elementary) multicolored forest. It is also clear from the proof of \cite[Proposition~5.3]{belk22} that if $[f_3]=[f_1]\vee[f_2]$ then the spectrum of $f_3$ equals the union of the spectra of $f_1$ and $f_2$. In particular, for any $u_{p_1}(s_1),\dots,u_{p_\ell}(s_\ell)$ in $|(v,w)|$, the join of these elements is represented by an elementary multicolored forest with spectrum of size at most $\ell$. Since $f$ is not $k$-elementary, whenever $\ell\le k$ we have that this join lies in $(v,w)$, and so we conclude that for any $1\le \ell\le k$ and any $u_{p_1}(s_1),\dots,u_{p_\ell}(s_\ell)$ in $|(v,w)|$, the stars of these $u_{p_i}(s_i)$ have contractible intersection, namely the star of the join of $u_{p_1}(s_1),\dots,u_{p_\ell}(s_\ell)$. This shows that the $(k-1)$-skeleton of the nerve of the covering by all these stars equals the $(k-1)$-skeleton of a simplex, hence is $(k-2)$-connected. (The dimension of the simplex is one less than the total number of such stars, which could be less than $k-1$, in which case the nerve is the entire simplex, hence contractible.) This also shows that the $(k-1)$-skeleton of $|(v,w)|$ has the same connectivity as the $(k-1)$-skeleton of the nerve \cite[Lemma~1.2]{bjoerner94}, and so we conclude that $|(v,w)|$ is $(k-2)$-connected, and hence $(n-2)$-connected, as desired.

Now consider the truncation $X_m(k)$ for $m$ large enough that $X_m$ is $(n-1)$-connected. We can do the exact same procedure as above since all the vertices in the open interval $(v,w)$ have rank less than the rank of $w$, hence less than $m$. We conclude that $X_m(k)$ is $(n-1)$-connected for all $k\ge n$ and all sufficiently large $m$.
\end{proof}

\medskip

The action of $SV_G$ on $|P_1|$ stabilizes $X_m(k)$ for each $m$ and $k$, so we can inspect the action of $SV_G$ on $X_m(k)$. If $m$ and $k$ are large enough, this is simply connected, for example $X_{21}(2)$. In order to get $SV_G$ to be finitely presented, we need to know things about cocompactness and about stabilizers. First we establish cocompactness

\begin{lemma}[Cocompact]\label{lem:cocpt}
If the action of $G$ on $S$ has finitely many orbits of $k$-element subsets of $S$, then the action of $SV_G$ on $X_m(k)$ is cocompact, for each $m\in\N$.
\end{lemma}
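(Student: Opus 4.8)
The plan is to show that $X_m(k)$ has only finitely many $SV_G$-orbits of simplices. Since $X_m(k)$ is finite-dimensional (simplices are chains $v_0 < \cdots < v_d$ with ranks strictly increasing and bounded by $m$, so $d < m$), it is enough to bound the number of orbits of simplices of each dimension, and in fact enough to handle top-dimensional-type data: a simplex is determined by its vertices together with the partial-order relations, so I would first reduce to counting orbits of \emph{vertices} and of \emph{edges} $v \preceq_k w$ of $X_m(k)$, since a $k$-elementary simplex $v_0 \prec_k v_1 \prec_k \cdots \prec_k v_d$ is determined by the pair $(v_0, v_d)$ together with the intermediate vertices, each of which is sandwiched $v_0 \preceq_k v_i \preceq_k v_d$; so it suffices to show that for each vertex $v_0$ there are finitely many $w$ with $v_0 \preceq_k w$ up to $\Stab_{SV_G}(v_0)$, and that there are finitely many orbits of vertices $v_0$.

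First I would count vertex orbits. A vertex of $X_m(k)$ is a class $[h] \in P_1$ with $h$ of rank $r \le m$, and $SV_G$ acts on $P_1$ by precomposition (a transitive-on-coranks setup), so $SV_G$ acts transitively on $P_1$ itself — indeed given $[h]$ and $[h']$ of the same rank, $h' h^{-1} \in SV_G$ (rank and corank $1$) moves one to the other. Hence there is exactly one $SV_G$-orbit of vertices of each rank $r$, and since $1 \le r \le m$ there are at most $m$ orbits of vertices; fix a representative $v_0 = [\id_r]$ for each.

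Next, for the fixed representative $v_0 = [\id_r]$, I would count the classes $w$ with $v_0 \preceq_k w$ modulo $\Stab_{SV_G}(v_0)$. Such a $w$ equals $[f]$ for a $k$-elementary multicolored forest $f$ of corank $r$; its spectrum is a subset $T \subseteq S$ with $|T| \le k$. The stabilizer of $v_0 = [\id_r]$ in $SV_G$ contains (in fact is closely related to) the twisted permutation group $\mathcal{G}(r) \cong G \wr_r \Sigma_r$ acting on the rank-$r$ data, and this lets me move the spectrum $T$ around by $G$. Concretely: two elementary forests $f, f'$ whose ``shapes'' agree and whose spectra are related by applying some $\gamma \in G$ coordinatewise represent vertices in the same $\Stab_{SV_G}(v_0)$-orbit, via the twist homeomorphism using $\gamma$ (using Citation~\ref{lem:swap} to push the twist past $f$). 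So the number of $\Stab_{SV_G}(v_0)$-orbits of such $w$ is bounded by: (the number of combinatorial shapes of $k$-elementary forests of corank $r$, which is finite because the rank is bounded by $m$ and the spectrum size by $k$, so there are only finitely many underlying ``multicolored tree shapes with at most $k$ colors and at most $m$ leaves'') times (the number of $G$-orbits of the assignment of colors, which by hypothesis is finite since there are finitely many $G$-orbits of subsets of $S$ of size $\le k$, equivalently of $k$-element subsets, as a subset of size $<k$ is a face of one of size $k$). The one routine check here is that ``same shape $+$ $G$-equivalent coloring'' genuinely implies ``same $\Stab_{SV_G}(v_0)$-orbit,'' which follows by applying a global twist $\iota_\varnothing(\gamma)$ and Citation~\ref{lem:swap}.

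The main obstacle I anticipate is bookkeeping rather than conceptual: making precise the correspondence between $k$-elementary forests up to the stabilizer action and the finite combinatorial data (shape, $G$-orbit of coloring), and checking that finitely many orbits of $k$-element subsets really does give finitely many orbits of colored shapes — this uses that the colors appearing at the carets of an elementary tree form a set (not a multiset, since no color repeats root-to-leaf within an elementary tree) of size $\le k$, so the relevant data is a function from a bounded combinatorial skeleton to $S$ with image of size $\le k$, and $G$ acts on such with finitely many orbits exactly under the hypothesis. Once vertex orbits and, for each, the finitely-many-$w$-up-to-stabilizer statement are in hand, finitely many orbits of simplices of each dimension follows, and finite-dimensionality of $X_m(k)$ gives cocompactness.
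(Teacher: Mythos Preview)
Your proposal is correct and follows essentially the same route as the paper: both argue that $SV_G$ is transitive on vertices of each fixed rank, reduce to counting $\Stab_{SV_G}(v)$-orbits of $w$ with $v\preceq_k w$ for a representative $v=[\id_r]$, and then use the diagonal $G$-action inside $\mathcal{G}(r)$ (via Citation~\ref{lem:swap}) to move the spectrum of the $k$-elementary forest $f$ into a fixed finite subset of $S$, leaving only finitely many possible $f$ of rank at most $m$. The one place where the paper is slightly crisper is the reduction from simplices to edges: rather than your ``shape $\times$ coloring'' bookkeeping, the paper simply observes that each closed interval $[v,w]$ in $P_1$ is finite, so once there are finitely many orbits of pairs $(v,w)$ with $v\preceq_k w$, there are automatically finitely many orbits of simplices with minimum vertex $v$.
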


\begin{proof}
We follow the proof of \cite[Proposition~6.7]{belk22}. There are finitely many $SV_G$-orbits of vertices in $X_m(k)$, since $SV_G$ is transitive on vertices of a given rank, and there are only finitely many ranks between $1$ and $m$. Now we need to show that for each vertex $v$ of rank $r\le m$, there are finitely many $\Stab_{SV_G}(v)$-orbits of simplices in $X_m(k)$ having $v$ as their vertex of minimum rank. Since each closed interval $[v,w]$ is finite, it is enough to check that there are finitely many $\Stab_{SV_G}(v)$-orbits of edges of the form $v<w$ in $X_m(k)$, or equivalently finitely many $\Stab_{SV_G}(v)$-orbits of vertices $w$ satisfying $v\preceq_k w$. Without loss of generality $v=[\id_r]$, so we want to show that there are finitely many $\mathcal{G}(r)$-orbits of vertices of the form $[f]$ for $f$ a $k$-elementary multicolored forest with rank at most $m$. Since $f$ is $k$-elementary and $S$ has finitely many $G$-orbits of $k$-element subsets, we can fix some $k$-element subset $T$ of $S$ such that every such $\mathcal{G}(r)$-orbit contains an element $[f]$ such that the spectrum of $f$ lies in $T$. The result now follows from the fact that there are only finitely many multicolored forests with rank at most $m$ and spectrum lying in $T$.
\end{proof}

We can also establish some results about stabilizers. Recall that two groups are \emph{commensurable} if they have isomorphic finite index subgroups. Note that any group commensurable to a group of type $\F_n$ is also of type $\F_n$, and a direct product of finitely many groups of type $\F_n$ is of type $\F_n$. This is because, on the level of classifying spaces, a complex has finite $n$-skeleton if and only if every finite-sheeted cover does, and a product of complexes with finite $n$-skeleta has finite $n$-skeleton.

\begin{cit}\cite[Lemma~6.3]{belk22}\label{cit:vtx_stab}
Let $v$ be a vertex of $X$ with rank $m$. Then the stabilizer of $v$ in $SV_G$ is isomorphic to $G\wr_m \Sigma_m$, hence commensurable to $G^m$. In particular, if $G$ is of type $\F_n$ then so is this stabilizer.
\end{cit}

\begin{definition}[Short/long edges]
Let $[h]<[fh]$ be an edge in $X$, so $f$ is an elementary multicolored forest. Call this edge \emph{short} if $f$ is a direct sum of one simple split and some number of copies of the identity, composed with a permutation homeomorphism. In other words, this edge is short provided that the rank of $[fh]$ is only one more than that of $[h]$. Call an edge \emph{long} if it is not short.
\end{definition}

The following is immediate from \cite[Proposition~6.5]{belk22}, since the spectrum of a simple split has size one.

\begin{cit}[Short edge stabilizers]\label{cit:short_edge_stab}
The stabilizer of any short edge in $SV_G$ is commensurable to a direct sum of finitely many copies of $G$ and $\Stab_G(s)$ for $s\in S$. In particular, if $G$ and each $\Stab_G(s)$ are of type $\F_n$, then so is the stabilizer of any short edge.
\end{cit}

\begin{lemma}[Long edge stabilizers]\label{lem:long_edge_stabs}
Let $v<w$ be a long edge. Then there is a path of short edges from $v$ to $w$ such that the fixer in $SV_G$ of this path has finite index in the stabilizer of $v<w$.
\end{lemma}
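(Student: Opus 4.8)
The plan is to reduce to an explicit model situation and then build the short-edge path by ``factoring'' the long edge $v<w$ into a sequence of simple splits. Without loss of generality, using the $SV_G$-action, I would assume $v=[\id_r]$, so that $w=[f]$ for some elementary multicolored forest $f$ of rank $>r+1$. Recalling the recursive definition of multicolored (forests and) trees, $f$ is built from the identity by repeatedly applying simple splits $x_s$ (interspersed with direct sums and permutation homeomorphisms). Thus there is a factorization $f = f_\ell \cdots f_1$ where each $f_j$ is a ``short'' multicolored forest, i.e., a single simple split direct-summed with identities and composed with a permutation. Setting $v = v_0 < v_1 < \cdots < v_\ell = w$ with $v_j = [f_j \cdots f_1]$, each consecutive pair $v_{j-1}<v_j$ is a short edge (rank goes up by exactly one), and since $f$ is elementary, the whole chain is elementary, so this is genuinely a path of short edges in $X$ from $v$ to $w$. (One should also check these intermediate vertices have rank at most that of $w$, so the path stays inside $X_m$ when $w$ does, but that is automatic.)

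The heart of the matter is the index claim: the fixer in $SV_G$ of the path $v_0<\cdots<v_\ell$ should have finite index in $\Stab_{SV_G}(v<w) = \Stab_{SV_G}(v)\cap\Stab_{SV_G}(w)$. First, fixing the path is the same as fixing every $v_j$, which (since $v=v_0$ and $w=v_\ell$ are on the path) certainly implies fixing the edge $v<w$; so the fixer of the path is a subgroup of $\Stab_{SV_G}(v<w)$, and we only need the index to be finite. Here I would use the description of edge stabilizers from \cite{belk22}: by \cite[Proposition~6.5]{belk22} (as quoted in Citation~\ref{cit:short_edge_stab} for the short case and in general), $\Stab_{SV_G}(v<w)$ is, up to finite index, a product of copies of $G$ (for the ``leaves'' of $f$ not touched, grouped into orbits) and copies of certain stabilizers $\Stab_G(T')$ where $T'$ ranges over the spectra of the trees appearing in $f$. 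Stabilizing all the intermediate $v_j$ amounts to remembering the finer combinatorial data of the factorization — morally, replacing each $\Stab_G(T')$ (with $|T'|\le k$) by the intersection $\bigcap_{s\in T'}\Stab_G(s)$, since each short split only involves one color at a time. So the index of the path-fixer in $\Stab_{SV_G}(v<w)$ is controlled by indices of the form $[\Stab_G(T') : \bigcap_{s\in T'}\Stab_G(s)]$, together with indices coming from how the ambient twisted permutation group $\mathcal{G}(\cdot)$ is cut down by remembering the order in which splits were performed. The latter indices are finite because everything in sight is a finite combinatorial refinement (finitely many leaves, a permutation on finitely many cubes); the former would need to be finite too — but there is no reason in general for $\bigcap_{s\in T'}\Stab_G(s)$ to have finite index in $\Stab_G(T')$, so I suspect one does \emph{not} literally get $\bigcap_{s}\Stab_G(s)$, but rather the \emph{pointwise} stabilizer of the ordered tuple arising from the factorization, which by the same \cite[Proposition~6.5]{belk22}-style analysis applied to the \emph{refined} forest is again (up to finite index) a product of $G$'s and $\Stab_G(s)$'s sitting inside $\Stab_{SV_G}(v<w)$ with finite index precisely because both are being computed from the same underlying forest via the proposition.

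Concretely, the cleanest route: apply the edge-stabilizer computation of \cite{belk22} not to the coarse edge $v<w$ but to the maximal chain $v_0<\cdots<v_\ell$ directly. The proof of \cite[Proposition~6.5]{belk22} identifies $\Stab_{SV_G}$ of a chain $[\id_r]<[f_\ell\cdots f_1]$-type data with the group of ``local symmetries'' of the underlying labelled forest, and doing this for the full chain versus just the endpoints differs only by whether one remembers the internal vertices of the trees. Since both the chain-stabilizer and $\Stab_{SV_G}(v<w)$ are, by that proposition, virtually products of $G$'s and point-stabilizers $\Stab_G(s)$ indexed by the same finite combinatorial data, with the chain-stabilizer sitting inside the edge-stabilizer, the quotient is a finite object (a quotient of a finite group of tree automorphisms), giving finite index. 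I would spell this out by invoking \cite[Proposition~6.5]{belk22} twice and comparing. The main obstacle, and the step deserving the most care, is exactly this comparison: making precise that passing from ``stabilize the pair $\{v,w\}$'' to ``stabilize the whole refining chain'' costs only finite index, which hinges on the fact that the spectrum of $f$ has bounded size and that the ambiguity removed is a choice within a \emph{finite} set of refinements of a \emph{fixed} labelled forest — not an infinite-index subgroup phenomenon. Everything else (the existence of the short-edge factorization, elementarity of the chain, rank bounds) is routine bookkeeping from the recursive definitions.
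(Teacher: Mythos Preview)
Your construction of the short-edge path by factoring $f$ into simple splits is correct and is essentially what the paper does. But you are vastly overcomplicating the finite-index claim, and your argument for it is not complete: you yourself flag that there is no reason for $\bigcap_{s\in T'}\Stab_G(s)$ to have finite index in $\Stab_G(T')$, and the workaround you propose (``invoke \cite[Proposition~6.5]{belk22} twice and compare'') is exactly the step that would need real work --- you never carry out the comparison, you only assert the quotient is finite because the underlying combinatorics are finite.

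The paper's proof bypasses all of this with a single observation: the closed interval $[v,w]$ in $P_1$ is \emph{finite}. Since the action is order-preserving, $\Stab_{SV_G}(v<w)$ permutes this finite set, so the kernel of the permutation representation --- the pointwise fixer of all of $[v,w]$ --- has finite index in $\Stab_{SV_G}(v<w)$. Your short-edge path lies entirely inside $|[v,w]|$, so its fixer contains the fixer of the whole interval and therefore also has finite index. No structural description of the stabilizers is needed; the finiteness of $[v,w]$ does all the work. Your final sentence (``the ambiguity removed is a choice within a finite set'') is the right intuition, but once you say it that way, you should recognize it as exactly this orbit--stabilizer argument on the finite interval, not as something requiring \cite[Proposition~6.5]{belk22}.
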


\begin{proof}
Since the closed interval $[v,w]$ is finite, the fixer of this entire interval has finite index in the stabilizer of $v<w$. Clearly there is a path of short edges from $v$ to $w$ lying in $|[v,w]|$, so the fixer of this path has finite index in the stabilizer of $v<w$.
\end{proof}

Note that even if every short edge stabilizer is finitely generated, this does not imply every long edge stabilizer is too. Indeed, in this case the fixer of a path of short edges is an intersection of finitely generated subgroups, which has no reason to also be finitely generated. However, we can use Proposition~\ref{prop:technical} to get around this issue, and are now poised to prove Theorem~\ref{thrm:main}.

\begin{proof}[Proof of Theorem~\ref{thrm:main}]
The forward direction is Proposition~\ref{prop:forward}. Now we do the backward direction. Suppose $G$ is a group with an action of type~(A) on a set $S$, and we need to prove that $SV_G$ is finitely presented. Consider the (orientation preserving) action of $SV_G$ on the complex $X_m(2)$, for $m\ge 21$. This complex is simply connected by Proposition~\ref{prop:hi_conn}, and the action is cocompact by Lemma~\ref{lem:cocpt}. Every vertex stabilizer is finitely presented by Citation~\ref{cit:vtx_stab}. Any stabilizer of a short edge is finitely generated by Citation~\ref{cit:short_edge_stab}. Finally, for any long edge, by Lemma~\ref{lem:long_edge_stabs} there is a path of short edges connecting its endpoints, such that the fixer of the path has finite index in the stabilizer of the long edge. Now Proposition~\ref{prop:technical} tells us that $SV_G$ is finitely presented.
\end{proof}

It would be interesting to try and prove some sort of higher dimensional analog of Lemma~\ref{lem:long_edge_stabs}, but it is unclear what to hope for. The crucial difference between the 1-dimensional versus higher dimension situations is that even if an edge is ``long'', its proper faces are all ``short'' (being vertices), whereas for whatever notion of ``long'' we try for higher simplices, proper faces will likely also be able to be long.

\section{Applications to the Boone--Higman conjecture}\label{sec:bh}

Recall Corollary~\ref{cor:main}, that any subgroup of a group admitting an action of type~(A) (has solvable word problem and) satisfies the Boone--Higman conjecture. In this section, we investigate this further.

\subsection{Permutational Boone--Higman}\label{ssec:bh_a}

First, let us define the following relative of the Boone--Higman conjecture:

\begin{conjecture}[Permutational Boone--Higman conjecture]
A finitely generated group has solvable word problem if and only if it embeds in a group admitting an action of type~(A).
\end{conjecture}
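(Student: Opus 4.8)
The two directions of this conjecture are of entirely different character, and I would handle them separately. The backward direction is already a theorem: if $\Gamma$ embeds into a group $G$ admitting an action of type~(A) on a set $S$, then $SV_G$ is finitely presented by Theorem~\ref{thrm:main} and simple by \cite[Theorem~3.4]{belk22}, and since the action is faithful the map $\iota_\varnothing$ embeds $G$, hence $\Gamma$, into $SV_G$; thus $\Gamma$ is a subgroup of a finitely presented simple group and in particular has solvable word problem. This is exactly Corollary~\ref{cor:main}, so no new argument is required.

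All the difficulty is in the forward direction: every finitely generated group $\Gamma$ with solvable word problem should embed into a group admitting a type~(A) action. I expect this to be the only, and a very serious, obstacle. By the backward direction together with Theorem~\ref{thrm:main}, such an embedding would in particular exhibit $\Gamma$ as a subgroup of a finitely presented simple group, so the forward direction of this conjecture \emph{implies} the forward direction of the Boone--Higman conjecture, which has been open since the early 1970s. I would therefore not attack the general case head-on. The most appealing line of attack would be to prove that ``embeds into a finitely presented simple group'' is \emph{equivalent} to ``embeds into a group admitting a type~(A) action'' --- the question raised in the introduction --- which would collapse Permutational Boone--Higman onto ordinary Boone--Higman; but this equivalence is itself open, and the subtle direction (upgrading an abstract embedding into \emph{some} finitely presented simple group $Q$ into an embedding into a group with a type~(A) action) seems to require structural control over $Q$ that one does not have.

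Failing a general statement, the concrete plan --- and what Section~\ref{sec:bh} carries out --- is to enlarge the stock of groups for which the conjecture is known, by building explicit actions of type~(A). For a class already known to satisfy Boone--Higman (hyperbolic groups, virtually special and virtually nilpotent groups, contracting self-similar groups, Baumslag--Solitar groups, (finite-rank free)-by-cyclic groups, and so on), one exhibits a finitely presented overgroup $G$ acting faithfully on a set $S$ with each $\Stab_G(s)$ finitely generated and with finitely many $G$-orbits of two-element subsets of $S$; equivalently, by Citation~\ref{cit:cornulier}, with $\Z\wr_S G$ finitely presented. Several of these embeddings are close to constructions already in the literature (e.g.\ Nekrashevych-type groups for the self-similar cases); the genuine work is checking the orbit-of-pairs and point-stabilizer conditions in each case. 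The main obstacle remains as stated: a uniform proof of the forward direction would settle the Boone--Higman conjecture itself, so with present techniques the realistic output is a growing list of verified classes rather than the full conjecture.
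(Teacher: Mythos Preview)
Your analysis is correct and matches the paper exactly. The statement is a \emph{conjecture}, not a theorem: the paper proves only the ``if'' direction (in the single sentence after the conjecture, pointing to Theorem~\ref{thrm:main} and the easy direction of Boone--Higman, i.e., Corollary~\ref{cor:main}), and explicitly leaves the ``only if'' direction open, noting---as you do---that it implies the forward direction of the classical Boone--Higman conjecture and raising the equivalence you describe as Question~\ref{quest:BHC_PBHC}.
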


The ``if'' direction is clear from the ``if'' direction of the Boone--Higman conjecture together with Theorem~\ref{thrm:main}. When we say a group ``satisfies'' this conjecture, we mean it (has solvable word problem and) embeds in a group admitting an action of type~(A). We do not know whether a group can satisfy the Boone--Higman conjecture without satisfying the permutational version (see Question~\ref{quest:BHC_PBHC}). Let us also emphasize that when we say a group satisfies the permutational Boone--Higman conjecture, we do not require the group itself to admit an action of type~(A), just that it embeds into a group that does.

\medskip

Let us give a new example of a family of groups satisfying the permutational Boone--Higman conjecture, and hence the Boone--Higman conjecture. Our examples come from so called shift-similar groups, introduced by Mallery and the author in \cite{mallery}. These can be viewed as an analog of self-similar groups (as in \cite{nekrashevych05}), that relate to Houghton groups rather than Thompson groups.

\begin{definition}[(Strongly) shift-similar]\label{def:shift_similar}
Consider $\Sym(\N)$, the group of permutations of $\N$. For each $j\in\N$ let $s_j \colon \N\to \N\setminus\{j\}$ be the bijection sending $i$ to $i$ for all $1\le i\le j-1$ and sending $i$ to $i+1$ for all $i\ge j$. Let $\psi_j\colon \Sym(\N)\to\Sym(\N)$ be the function defined by
\[
\psi_j(g) \coloneqq s_{g(j)}^{-1} \circ g|_{\N\setminus\{j\}} \circ s_j \text{.}
\]
Call a subgroup $G\le\Sym(\N)$ \emph{shift-similar} if for all $g\in G$ and all $j\in\N$ we have $\psi_j(g)\in G$. Call $G$ \emph{strongly shift-similar} if each $\psi_j$ restricted to $G$ is surjective.
\end{definition}

One of the most notable properties of shift-similar groups is that every finitely generated group embeds into a finitely generated, strongly shift-similar group \cite[Theorem~3.28]{mallery}. Thus, finitely generated (strongly) shift-similar groups are abundant and can be quite wild. In contrast, for finite presentability we can now prove the following:

\begin{proposition}\label{prop:shift_similar}
Every finitely presented, strongly shift-similar group admits an action of type~(A), and so in particular has solvable word problem (and satisfies the (permutational) Boone--Higman conjecture).
\end{proposition}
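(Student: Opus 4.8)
The plan is to verify, for a finitely presented strongly shift-similar group $G$, that $G$ admits an action of type~(A), and then invoke Corollary~\ref{cor:main} (via Theorem~\ref{thrm:main}) to conclude the Boone--Higman consequences. The natural candidate for the set $S$ is $\N$ itself, with $G$ acting by the given faithful embedding $G\le\Sym(\N)$. Then three things must be checked: $G$ is finitely presented (given by hypothesis), there are finitely many $G$-orbits of two-element subsets of $\N$, and each point stabilizer $\Stab_G(j)$ is finitely generated.

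First I would dispose of the two-element orbits. The key leverage is strong shift-similarity: I would argue that $G$ is already transitive on $\N$ (strong shift-similarity forces a rich supply of elements, and in fact one should be able to show the $\psi_j$ being surjective produces, for each pair $i,j$, an element of $G$ swapping or at least moving $i$ to $j$; more carefully, one uses that $\psi_j(G)=G$ together with the fact that $s_j$ conjugation shifts the point being deleted), so there is one orbit of singletons. For pairs, one similarly wants to bound the number of $G$-orbits of $\{i,j\}$; the cleanest route is to show $\Stab_G(1)$ acts transitively (or with boundedly many orbits) on $\N\setminus\{1\}$, again extracting the needed elements from surjectivity of the maps $\psi_j$. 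This is essentially a bookkeeping argument with the bijections $s_j$ and the definition of $\psi_j$, analogous to how one analyzes Houghton-type groups.

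Next I would handle the point stabilizers. Here the crucial observation is that $\psi_j\colon G\to G$ restricted to $\Stab_G(j)$ (the elements fixing $j$) is precisely conjugation by $s_j$ followed by restriction, and for $g$ fixing $j$ one has $\psi_j(g)=s_j^{-1}\circ g|_{\N\setminus\{j\}}\circ s_j$, which realizes $\Stab_G(j)$ as (a subgroup isomorphic to, or mapping onto) a copy of $G$ sitting inside $\Sym(\N)$. Strong shift-similarity says $\psi_j$ is onto $G$, so $\Stab_G(j)$ maps onto $G$ via $\psi_j$ with kernel contained in the pointwise stabilizer of all of $\N\setminus\{j\}$, hence trivial kernel; thus $\Stab_G(j)\cong G$, which is finitely generated since $G$ is finitely presented. (If the isomorphism is not quite immediate one still gets that $\Stab_G(j)$ surjects onto the finitely presented group $G$ with small kernel, and combined with transitivity of $G$ on $\N$ — so all point stabilizers are conjugate — one gets finite generation of each $\Stab_G(j)$.)

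The main obstacle I expect is the orbit-of-pairs count: it requires genuinely using the \emph{strong} form of shift-similarity to manufacture enough group elements, and the combinatorics of the shift maps $s_j$ and the recursion $\psi_j$ is the delicate part, since one must track how deleting and reinserting points interacts with a general $g\in G$. The transitivity and stabilizer computations, by contrast, should be fairly direct unwindings of Definition~\ref{def:shift_similar}. Once type~(A) is verified, the conclusion is immediate: $SV_G$ is finitely presented and simple by Theorem~\ref{thrm:main} and \cite[Theorem~3.4]{belk22}, $G$ embeds into it, so $G$ has solvable word problem and satisfies the (permutational) Boone--Higman conjecture.
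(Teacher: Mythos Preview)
Your overall strategy matches the paper's exactly: use the given action of $G$ on $\N$, verify type~(A), and invoke Corollary~\ref{cor:main}. Your treatment of point stabilizers is also essentially the paper's --- restricting $\psi_j$ to $\Stab_G(j)$ gives an isomorphism onto $G$ (this is \cite[Lemma~3.19]{mallery}), so each stabilizer is finitely generated.

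The gap is at the step you yourself flag as delicate, the orbit count on pairs. You describe this as ``bookkeeping'' with the maps $s_j$ and $\psi_j$, but in fact nothing in the bare definition of strong shift-similarity hands you an element moving a prescribed $i$ to a prescribed $j$: surjectivity of $\psi_j$ lets you \emph{lift} elements of $G$ through the deletion-at-$j$ process, but it does not directly manufacture, say, a transposition. The paper does not attempt this kind of direct argument; instead it invokes \cite[Theorem~3.12]{mallery}, which says that every infinite shift-similar group contains $\Symfin(\N)$. That single citation gives high transitivity (hence one orbit of two-element subsets) for free, and is the substantive input you are missing. You should either cite that theorem or be aware that reproving it is more than routine combinatorics.

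A minor point you also omit: the action of a \emph{finite} $G$ on $\N$ has infinitely many orbits, so it is not of type~(A). The paper disposes of this case at the outset (a finite group trivially admits a type~(A) action on a finite set), and then assumes $G$ is infinite so that the $\Symfin(\N)$ result applies.
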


\begin{proof}
Let $G\le \Sym(\N)$ be finitely presented and strongly shift-similar. We claim the action of $G$ on $\N$ is of type~(A). If $G$ is finite there is nothing to prove, so assume $G$ is infinite. By \cite[Theorem~3.12]{mallery}, we know that $G$ contains $\Symfin(\N)$, the subgroup of elements of $\Sym(\N)$ that are the identity outside a finite subset. In particular, the action of $G$ on $\N$ has finitely many orbits of two-element subsets (in fact it is highly transitive). It remains only to prove that each point stabilizer is finitely generated. Let $G_j=\Stab_G(j)$. Since $G$ is strongly shift-similar, by \cite[Lemma~3.19]{mallery} the restriction of $\psi_j$ to $G_j$ is an isomorphism $\psi_j \colon G_j \to G$. Hence $G_j$ is finitely generated (even finitely presented).
\end{proof}

Actually, it is not hard to see that all stabilizers of finite subsets are isomorphic to $G$, hence finitely presented, so the results from \cite{belk22} already tell us that the corresponding twisted Brin--Thompson group is finitely presented.

As a remark, Proposition~\ref{prop:shift_similar} is somewhat analogous to the self-similar case; indeed, every finitely presented self-similar group satisfies the Boone--Higman conjecture \cite{zaremsky_fpss}. This also gives us the following, which shows that the analog of \cite[Theorem~3.28]{mallery} for finite presentability is false:

\begin{corollary}\label{cor:shift_similar}
It is not true that every finitely presented group embeds into a finitely presented strongly shift-similar group.
\end{corollary}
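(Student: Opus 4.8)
The plan is to derive Corollary~\ref{cor:shift_similar} by combining Proposition~\ref{prop:shift_similar} with the known fact that there exist finitely presented groups with unsolvable word problem. Indeed, suppose for contradiction that every finitely presented group embeds into a finitely presented strongly shift-similar group. Take any finitely presented group $H$ with unsolvable word problem (such groups exist by the classical Boone--Novikov theorem). By our assumption, $H$ embeds into some finitely presented strongly shift-similar group $G\le\Sym(\N)$. By Proposition~\ref{prop:shift_similar}, $G$ has solvable word problem. But solvability of the word problem passes to finitely generated subgroups, so $H$ has solvable word problem, a contradiction.

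More precisely, the key steps in order are as follows. First I would invoke the existence of a finitely presented group $H$ with unsolvable word problem; this is standard and needs no new argument. Second, I would assume toward a contradiction that $H$ (or rather, every finitely presented group, and in particular this $H$) embeds into a finitely presented strongly shift-similar group $G$. Third, apply Proposition~\ref{prop:shift_similar} to conclude $G$ has solvable word problem. Fourth, use the elementary fact that a finitely generated subgroup of a group with solvable word problem again has solvable word problem (given a finite generating set of the subgroup, rewrite each generator as a word in the generators of the ambient group, and then any word in the subgroup generators can be rewritten and tested in the ambient group). This contradicts the choice of $H$, completing the proof.

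There is essentially no obstacle here: the statement is a direct corollary, and all the substantive work has already been done in Proposition~\ref{prop:shift_similar} and in the classical literature. The only point requiring a sentence of care is the passage of solvable word problem to finitely generated subgroups, which is routine. One could alternatively phrase the argument via the Boone--Higman conjecture framework: since $SV_G$ is simple (by \cite[Theorem~3.4]{belk22}) and, for $G$ as in Proposition~\ref{prop:shift_similar}, finitely presented, any finitely generated subgroup of $G$ embeds into a finitely presented simple group and hence has solvable word problem; but invoking solvable word problem directly, as above, is the cleanest route.
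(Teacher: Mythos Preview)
Your proposal is correct and follows exactly the same approach as the paper's proof: invoke Proposition~\ref{prop:shift_similar} to see that finitely presented strongly shift-similar groups have solvable word problem, and then observe that a finitely presented group with unsolvable word problem cannot embed into such a group. The paper's version is simply terser, omitting the explicit mention of Boone--Novikov and the remark about passage to subgroups.
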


\begin{proof}
By Proposition~\ref{prop:shift_similar}, every finitely presented, strongly shift-similar group has solvable word problem, and so a finitely presented group with unsolvable word problem cannot embed into such a group.
\end{proof}

See \cite[Example~3.31]{mallery} for more on possible future connections between shift-similar groups and the Boone--Higman conjecture.

\medskip

We close this subsection by establishing some closure properties for groups satisfying the permutational Boone--Higman conjecture. First we prove closure under direct products.

\begin{proposition}\label{prop:direct_prod}
If two groups satisfy the permutational Boone--Higman conjecture, then so does their direct product.
\end{proposition}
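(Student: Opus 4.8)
The plan is to show that if $G_1 \hookrightarrow H_1$ and $G_2 \hookrightarrow H_2$ where $H_1$ acts on $S_1$ and $H_2$ acts on $S_2$, both actions being of type~(A), then $G_1 \times G_2 \hookrightarrow H_1 \times H_2$ and the latter group admits an action of type~(A) on a suitable set built from $S_1$ and $S_2$. The embedding $G_1\times G_2 \hookrightarrow H_1 \times H_2$ is immediate. So the real content is producing the type~(A) action of $H_1 \times H_2$.

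First I would try the most naive thing: let $H_1 \times H_2$ act on the disjoint union $S_1 \sqcup S_2$, where $H_1$ acts on $S_1$ in the natural way and trivially on $S_2$, and vice versa for $H_2$. This action is faithful (since each $H_i$ acts faithfully on $S_i$), $H_1\times H_2$ is finitely presented (being a product of finitely presented groups), and the point stabilizers behave well: $\Stab_{H_1\times H_2}(s)$ for $s\in S_1$ is $\Stab_{H_1}(s)\times H_2$, which is finitely generated since $\Stab_{H_1}(s)$ and $H_2$ both are. The one condition to check carefully is finitely many orbits of two-element subsets. A two-element subset is either $\{s,s'\}$ with both in $S_1$, both in $S_2$, or one in each. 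Pairs within $S_1$: the $H_1\times H_2$-orbits coincide with the $H_1$-orbits of pairs in $S_1$ since $H_2$ acts trivially there, and there are finitely many by type~(A) of the $H_1$-action. Symmetrically for pairs within $S_2$. Mixed pairs $\{s,t\}$ with $s\in S_1$, $t\in S_2$: the orbit is determined by the pair of orbits $(H_1 s, H_2 t)$, and since $S_i$ has finitely many $H_i$-orbits of singletons (a consequence of having finitely many orbits of pairs, provided $|S_i|\geq 2$; the small cases $|S_i|\le 1$ are trivial to handle separately), there are finitely many mixed orbits too. Hence the disjoint-union action is of type~(A).

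The one subtlety I anticipate is the degenerate cases where some $S_i$ is empty or a singleton — then $H_i$ is trivial and the statement is vacuous or reduces to the other factor — so these should just be dispatched with a sentence at the start. The other point worth a remark is the deduction that finitely many orbits of pairs implies finitely many orbits of singletons: if $|S|\ge 2$, each singleton $\{s\}$ sits inside some pair, and the map from pairs to unordered pairs of singleton-orbits has finite image, so there are finitely many singleton-orbits. I expect this to be the only place requiring a small argument rather than pure bookkeeping; everything else is the standard fact that finite presentability and finite generation are preserved under finite direct products.

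\begin{proof}
Suppose $G_1$ and $G_2$ satisfy the permutational Boone--Higman conjecture, so for $i=1,2$ there is an embedding $G_i\hookrightarrow H_i$ with $H_i$ admitting an action of type~(A) on a set $S_i$. Then $G_1\times G_2$ embeds into $H_1\times H_2$, so it suffices to prove that $H_1\times H_2$ admits an action of type~(A). If some $S_i$ has at most one element then $H_i=\{1\}$ (by faithfulness) and the claim reduces to the single-factor case, so assume each $S_i$ has at least two elements.

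Let $H_1\times H_2$ act on the disjoint union $S\coloneqq S_1\sqcup S_2$, with $H_1$ acting on $S_1$ via its given action and trivially on $S_2$, and $H_2$ acting on $S_2$ via its given action and trivially on $S_1$. This action is faithful: if $(\gamma_1,\gamma_2)$ acts trivially, then $\gamma_1$ acts trivially on $S_1$ and $\gamma_2$ on $S_2$, so $\gamma_1=1$ and $\gamma_2=1$ by faithfulness of the given actions. Since $H_1$ and $H_2$ are finitely presented, so is $H_1\times H_2$. For $s\in S_1$ we have $\Stab_{H_1\times H_2}(s)=\Stab_{H_1}(s)\times H_2$, which is finitely generated since both factors are (using condition~(A) for $H_1$ and finite presentability of $H_2$); the case $s\in S_2$ is symmetric.

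It remains to check that there are finitely many $H_1\times H_2$-orbits of two-element subsets of $S$. First note that, since $|S_i|\ge 2$ and there are finitely many $H_i$-orbits of two-element subsets of $S_i$, there are also finitely many $H_i$-orbits of one-element subsets of $S_i$: every singleton is contained in some pair, and pairs in the same $H_i$-orbit contain singletons in matching orbits, so the number of singleton-orbits is at most twice the number of pair-orbits. Now let $\{a,b\}$ be a two-element subset of $S$. If $a,b\in S_1$, then since $H_2$ fixes $S_1$ pointwise the $H_1\times H_2$-orbit of $\{a,b\}$ is its $H_1$-orbit, and there are finitely many of these; the case $a,b\in S_2$ is symmetric. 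If $a\in S_1$ and $b\in S_2$, then the $H_1\times H_2$-orbit of $\{a,b\}$ is determined by the pair consisting of the $H_1$-orbit of $a$ and the $H_2$-orbit of $b$, and there are finitely many such pairs by the previous sentence. Hence there are finitely many orbits of two-element subsets, so the action is of type~(A).
\end{proof}
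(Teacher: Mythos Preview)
Your proof is correct and takes essentially the same approach as the paper: both reduce to showing that if $H_1$ and $H_2$ admit type~(A) actions on $S_1$ and $S_2$, then $H_1\times H_2$ acts with type~(A) on the disjoint union $S_1\sqcup S_2$. The paper merely asserts this is ``easily checked,'' whereas you have written out the verification in full, including the small observation that finitely many orbits of two-element subsets forces finitely many orbits of singletons when $|S_i|\ge 2$.
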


\begin{proof}
A direct product of monomorphisms is a monomorphism, so it suffices to prove that if the groups $G$ and $H$ act on the sets $S$ and $T$ respectively, with both actions of type~(A), then $G\times H$ admits an action of type~(A) on some set. Indeed, the action of $G\times H$ on $S\sqcup T$, where $G$ fixes $T$ and $H$ fixes $S$, is easily checked to be of type~(A).
\end{proof}

We can also prove closure under commensurability. This proof is due to Francesco Fournier-Facio.

\begin{proposition}\label{prop:commensurable}
Let $G$ be a group. If a finite index subgroup of $G$ satisfies the permutational Boone--Higman conjecture, then so does $G$. In particular, satisfying the permutational Boone--Higman conjecture is a commensurability invariant.
\end{proposition}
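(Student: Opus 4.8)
The plan is to reduce the statement to the case of a finite index \emph{normal} subgroup, and then to build a type~(A) action for $G$ out of one for the normal subgroup by an induction (co-induction) construction. First I would observe that if $N\le G$ has finite index, then $N$ contains a subgroup $N_0$ that is normal and of finite index in $G$ (for instance the normal core $\bigcap_{g\in G}gNg^{-1}$), and $N_0$ is also finite index in $N$, hence satisfies the permutational Boone--Higman conjecture by hypothesis (a finite index subgroup of a group satisfying the conjecture again satisfies it, since it embeds into the same group admitting a type~(A) action). So without loss of generality we may assume $N$ is normal in $G$, with $G/N$ finite, and that $N$ embeds into a group $H$ admitting an action of type~(A) on a set $S$.

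Next, let $Q=G/N$ and fix coset representatives $g_1,\dots,g_r$ for $N$ in $G$. The idea is to let $G$ act on the set $S\times Q$, roughly by permuting the $Q$-factor via left translation and acting on the $S$-factor through $N$ in a way twisted by the coset. More precisely, I would first replace $H$ by $H\wr_Q Q=\bigoplus_Q H\rtimes Q$ acting on $S\times Q$ (the product action: on the $q$-th copy of $S$ the $q$-th factor $H$ acts, and $Q$ permutes the copies). One checks this wreath product acts faithfully on $S\times Q$ and that this action is of type~(A): $H\wr_Q Q$ is finitely presented by Citation~\ref{cit:cornulier} since $Q$ is finite and $H$ is finitely presented, the point stabilizer of $(s,q)$ is $\mathrm{Stab}_H(s)\times\bigoplus_{q'\neq q}H$ which is finitely generated, and $Q$ finite forces finitely many orbits of pairs. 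Then I would show $G$ embeds into $H\wr_Q Q$: the restriction of the embedding $N\hookrightarrow H$ together with the coset action gives a map $G\to H\wr_Q Q$ by sending $g\in G$ to the element that permutes the $Q$-coordinates by left multiplication by $\bar g$ and, in the $q$-coordinate, records the ``$N$-part'' $g_{q}^{-1}\,g\,g_{\bar g^{-1} q}\in N$ (indices chosen so the cocycle condition works out), composed with the embedding of that $N$-part into $H$. This is the standard embedding of a group into the permutational wreath product of a normal subgroup by the quotient; verifying it is a well-defined injective homomorphism is a routine cocycle computation.

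Finally, I would conclude: $G$ embeds into $H\wr_Q Q$, which admits an action of type~(A), so $G$ satisfies the permutational Boone--Higman conjecture. For the ``in particular'' clause, commensurability invariance follows since if $G_1$ and $G_2$ are commensurable then they share a common finite index subgroup, so if one satisfies the conjecture then that common subgroup does (being finite index in it), hence by what we just proved the other does too.

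The main obstacle is getting the embedding $G\hookrightarrow H\wr_Q Q$ exactly right---specifically pinning down the cocycle $g\mapsto (g_q^{-1} g\, g_{\bar g^{-1}q})_{q}$ so that it genuinely is a homomorphism into the semidirect product and is injective (injectivity is easy since already the $N$ on the identity coset is mapped faithfully, but checking the homomorphism identity requires care with the left/right conventions and the action of $Q$ on $\bigoplus_Q H$). Everything else---verifying type~(A) for the wreath product via Citation~\ref{cit:cornulier}, and the reduction to the normal case---is straightforward.
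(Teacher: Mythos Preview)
Your proposal is correct and follows essentially the same approach as the paper: reduce to the normal case, invoke the Kaloujnine--Krasner embedding of $G$ into a wreath product of the normal subgroup by the finite quotient, and then enlarge to a wreath product of the type~(A) group by the finite quotient, checking that the latter still admits a type~(A) action. The only cosmetic differences are that the paper cites Kaloujnine--Krasner by name rather than spelling out the cocycle, and uses $E\wr_n\Sigma_n$ (with the full symmetric group on top) in place of your $H\wr_Q Q$; since $Q$ embeds in $\Sigma_n$ via the regular representation, your target group sits inside the paper's, and either suffices.
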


\begin{proof}
Let $H \le G$ be a finite index subgroup that satisfies the permutational Boone--Higman conjecture. Say $H$ embeds into a group $E$ admitting an action of type~(A) on a set $S$. Up to passing to a deeper finite index subgroup, we may assume that $H$ is normal in $G$. By the Kaloujnine--Krasner extension theorem \cite{KK}, there is an embedding of $G$ into the regular wreath product $H \wr (G/H)$. Now consider the permutational wreath product $E \wr_n \Sigma_n$ with its natural action on $S \times \{1,\dots,n\}$. Since $\Sigma_n$ is finite and the action of $E$ on $S$ is of type~(A), it is straightforward to check that the action of $E \wr_n \Sigma_n$ on $S\times\{1,\dots,n\}$ is of type~(A). Choosing $n = |G/H|$, and embedding $G/H$ into $\Sigma_n$ via the regular action, we get embeddings
\[
G \hookrightarrow H \wr (G/H) \hookrightarrow E \wr_n \Sigma_n \text{,}
\]
proving that $G$ embeds into a group with an action of type~(A).
\end{proof}

As far as we can tell, neither of these results obviously holds for the Boone--Higman conjecture, without assuming the starting groups satisfy the permutational version. That is, it is not clear how to prove that a direct product of two finitely presented simple groups embeds into a finitely presented simple group, nor how to upgrade a virtual embedding into a finitely presented simple group to a full embedding. Some related combination-type results are unclear to us even for the permutational Boone--Higman conjecture, for example:

\begin{question}
If two groups $G$ and $H$ admit actions of type~(A) then does their free product $G*H$ admit an action of type~(A)? What about wreath products? Or general semidirect products?
\end{question}

Regarding free products, for example it is not even clear to us how to prove that the free product of two copies of Thompson's group $V$ satisfies the Boone--Higman conjecture.

\subsection{Cosets and double cosets}\label{ssec:cosets}

In this subsection, we pin down some abstract sufficient conditions for a group to admit an action of type~(A), which could potentially be useful in the future. Recall that for $H,K\le G$, a \emph{double coset} of $H$ and $K$ is a subset of $G$ of the form $HgK$ for $g\in G$, and we denote the set of all double cosets of $H$ and $K$ by $H\backslash G/K$.

\begin{proposition}\label{prop:double_cosets}
Let $G$ be a finitely presented group and $H_1,\dots,H_n\le G$ finitely generated subgroups. Suppose that the intersection of all conjugates of all the $H_i$ is trivial, and that there are finitely many double cosets in $H_i\backslash G/H_j$, for all $1\le i,j\le n$. Then $G$ admits an action of type~(A), and hence satisfies the (permutational) Boone--Higman conjecture.
\end{proposition}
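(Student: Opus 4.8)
The plan is to build a set $S$ on which $G$ acts so that condition~(A) holds, using the subgroups $H_1,\dots,H_n$ to control the orbits and stabilizers. The natural candidate is $S = \bigsqcup_{i=1}^n G/H_i$, the disjoint union of the coset spaces, with $G$ acting by left multiplication on each factor. I would then check each clause of condition~(A) in turn. First, $G$ is finitely presented by hypothesis, so that is immediate. Second, the stabilizer of a point $gH_i \in G/H_i$ is the conjugate $gH_ig^{-1}$, which is finitely generated since $H_i$ is; so every point stabilizer is finitely generated. Third, faithfulness: the kernel of the action on $S$ is exactly the intersection of all point stabilizers, i.e.\ the intersection of all conjugates of all the $H_i$, which is trivial by hypothesis. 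So the action is faithful.

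The remaining clause to verify is that there are finitely many $G$-orbits of two-element subsets of $S$. Equivalently (as noted after Citation~\ref{cit:cornulier}), there should be finitely many $G$-orbits on $S \times S$ under the diagonal action. Now $S \times S = \bigsqcup_{i,j} (G/H_i \times G/H_j)$, a finite disjoint union since $n$ is finite, so it suffices to see that $G$ has finitely many orbits on each $G/H_i \times G/H_j$. But the $G$-orbits on $G/H_i \times G/H_j$ under the diagonal left action are in natural bijection with the double cosets $H_i \backslash G / H_j$: the orbit of $(H_i, gH_j)$ is determined by the double coset $H_i g H_j$. By hypothesis each $H_i\backslash G/H_j$ is finite, and there are finitely many pairs $(i,j)$, so $S\times S$ has finitely many $G$-orbits, hence so does the set of two-element subsets of $S$. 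This establishes that the action of $G$ on $S$ is of type~(A). The final sentence of the proposition then follows from Corollary~\ref{cor:main} (or directly from Theorem~\ref{thrm:main} together with the easy direction of the Boone--Higman conjecture), applied to $G$ itself as a subgroup of the group $G$ admitting this action.

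I do not expect any step here to be a serious obstacle; the proof is essentially an unwinding of the definitions, with the one genuinely substantive input being the standard bijection between diagonal $G$-orbits on $G/H_i\times G/H_j$ and double cosets $H_i\backslash G/H_j$, together with the observation that faithfulness is precisely the triviality of the intersection of all conjugates of all the $H_i$. The mildly delicate point worth stating carefully is that faithfulness must be checked on $S$ as a whole, not factor by factor: a single $G/H_i$ need not give a faithful action (its kernel is the normal core of $H_i$), which is exactly why the hypothesis is phrased in terms of the intersection over \emph{all} the $H_i$ and all their conjugates. I would write the proof in a few sentences, taking $S=\bigsqcup_{i=1}^n G/H_i$, verifying the four conditions of (A) in the order finite presentability, point stabilizers, faithfulness, orbits of pairs, and then invoking Corollary~\ref{cor:main}.
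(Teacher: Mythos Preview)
Your proof is correct and follows essentially the same approach as the paper: take $S=\bigsqcup_{i=1}^n G/H_i$ with the left translation action, and verify the clauses of condition~(A) exactly as you describe. The paper's proof is a terser version of the same argument.
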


\begin{proof}
Let $S=G/H_1 \sqcup \cdots \sqcup G/H_n$, with the action of $G$ by left translation. This action is faithful since the intersection of all conjugates of all the $H_i$ is trivial. It has finitely many orbits of two-element subsets since there are finitely many double cosets in $H_i\backslash G/H_j$ for each $i$ and $j$. Every point stabilizer is a conjugate of some $H_i$, hence is finitely generated. Thus, the action of $G$ on $S$ is of type~(A).
\end{proof}

Note that the $n=1$ case of Proposition~\ref{prop:double_cosets} is already interesting: we want a finitely presented group $G$ and a finitely generated subgroup $H$, such that the intersection of all conjugates of $H$ is trivial and there are finitely many double cosets in $H\backslash G/H$. In this case, compared to the $n>1$ case, it is more difficult for the intersection of conjugates to be trivial, but easier for there to be finitely many double cosets.

\medskip

One natural situation where interesting actions on sets of cosets arise is semidirect products, or more generally Zappa--Sz\'ep products. Recall that a group $G$ is the \emph{(internal) Zappa-Sz\'ep product} of subgroups $H,K\le G$ provided that $G=HK$ and $H\cap K=\{1\}$. Write $G=H\bowtie K$. This generalizes semidirect products, in that we do not require either $H$ or $K$ to be normal. Note that for all $h\in H$ and $k\in K$ there exist unique $h'\in H$ and $k'\in K$ such that $kh=h'k'$. Since $G=HK$, every double coset in $H\backslash G/H$ is of the form $HkH$ for some $k\in K$. Moreover, if $kh=h'k'$ for $h,h'\in H$ and $k,k'\in K$, then $HkH=Hk'H$. Let $\phi\colon K\times H\to K$ be the function sending $(k,h)$ to the $k'$ satisfying $kh=h'k'$ for some $h'\in H$, so this is a right action of $H$ on the set $K$.

\begin{lemma}\label{lem:zs}
Let $G=H\bowtie K$ as above, with $G$ finitely presented and $H$ finitely generated. Suppose that the action $\phi$ of $H$ on $K$ is faithful and has finitely many orbits. Then $G$ admits an action of type~(A), and hence satisfies the (permutational) Boone--Higman conjecture.
\end{lemma}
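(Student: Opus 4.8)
The plan is to deduce this directly from Proposition~\ref{prop:double_cosets}, applied with $n=1$ and $H_1=H$. We are given that $G$ is finitely presented and that $H$ is finitely generated, so it remains only to verify the two hypotheses of that proposition: that $H\backslash G/H$ is finite, and that the intersection of all $G$-conjugates of $H$ is trivial.

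For finiteness of $H\backslash G/H$, I would invoke the description of double cosets recorded just before the lemma. Since $G=HK$, every double coset has the form $HkH$ for some $k\in K$, and if $kh=h'k'$ with $h,h'\in H$ and $k,k'\in K$ then $HkH=Hk'H$. In other words, the element $k'=\phi(k,h)$ lies in the same double coset as $k$, so the double coset $HkH$ depends only on the $\phi$-orbit of $k$ in $K$. Hence the number of double cosets is at most the number of $\phi$-orbits on $K$, which is finite by hypothesis.

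For triviality of the core, let $N=\bigcap_{g\in G} gHg^{-1}$ be the normal core of $H$, and let $n\in N$. For each $k\in K$, taking $g=k^{-1}$ shows $n\in k^{-1}Hk$, i.e.\ $knk^{-1}\in H$. Writing $kn=h'\,\phi(k,n)$ with $h'\in H$ and $\phi(k,n)\in K$, the condition $knk^{-1}\in H$ together with uniqueness of the decomposition $G=HK$ forces $\phi(k,n)=k$. Since this holds for every $k\in K$, the element $n$ acts trivially on $K$ via $\phi$, so faithfulness of $\phi$ gives $n=1$. Thus $N$ is trivial, and Proposition~\ref{prop:double_cosets} applies to produce an action of $G$ of type~(A); the (permutational) Boone--Higman conclusions then follow as in Corollary~\ref{cor:main} and the ``if'' direction of Theorem~\ref{thrm:main}.

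There is no serious obstacle here: the one point that requires care is translating faithfulness of $\phi$ into triviality of the core, which is exactly the short computation relating the condition $\phi(k,n)=k$ to $knk^{-1}\in H$ via uniqueness of Zappa--Sz\'ep factorizations.
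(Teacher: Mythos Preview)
Your proposal is correct and follows essentially the same route as the paper's proof: both apply Proposition~\ref{prop:double_cosets} with $n=1$, use the identification of double cosets $HkH$ with $\phi$-orbits in $K$ to get finiteness, and deduce triviality of the core of $H$ from faithfulness of $\phi$ via the computation $khk^{-1}=h'(\phi(k,h)k^{-1})$ and uniqueness of the $HK$-factorization. The only cosmetic difference is that the paper argues directly (for $h\ne 1$ find $k$ with $khk^{-1}\notin H$), while you phrase it as the contrapositive.
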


\begin{proof}
Since the action of $H$ on $K$ is faithful, for all $1\ne h\in H$ there exists $k\in K$ such that when we write $kh=h'k'$ as above, $k'\ne k$. In particular, $khk^{-1}=h'(k'k^{-1})\not\in H$. Hence, the intersection of all conjugates of $H$ is trivial, which implies that the right action of $G$ on the set of right cosets $H\backslash G$ is faithful. An arbitrary right coset looks like $Hk$ for $k\in K$. The action of $H$ on $K$ has finitely many orbits, so there are finitely many double cosets $HkH$. The result now follows from Proposition~\ref{prop:double_cosets}.
\end{proof}

We emphasize that very little needs to be assumed about $K$ here -- it does not need to be finitely generated, and the action of $H$ on $K$ does not need to be an action by group automorphisms.

In case $K$ is normal in $G$ we have $G=H\ltimes K$, and we get the following, which is worth recording.

\begin{corollary}\label{cor:semidirect}
Let $G$ be a finitely presented group. Suppose that $G$ decomposes as a semidirect product $G=H\ltimes K$ such that the action of $H$ on $K$ is faithful and has finitely many orbits. Then $G$ admits an action of type~(A), and hence satisfies the (permutational) Boone--Higman conjecture.
\end{corollary}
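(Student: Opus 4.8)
The plan is to deduce this directly from Lemma~\ref{lem:zs}, by recognizing a semidirect product $G = H \ltimes K$ as the special case of an internal Zappa--Sz\'ep product $G = H\bowtie K$ in which $K$ is normal in $G$. So the first step is purely a matter of checking that the hypotheses of Lemma~\ref{lem:zs} are met. The one hypothesis of Lemma~\ref{lem:zs} not literally assumed in the corollary is that $H$ be finitely generated; but $H\cong G/K$ is a quotient of the finitely presented (hence finitely generated) group $G$, so $H$ is finitely generated, and this point is disposed of immediately.

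The second step is to identify the ad hoc right action $\phi\colon K\times H\to K$ from the discussion before Lemma~\ref{lem:zs} with the conjugation action of $H$ on $K$. For $h\in H$ and $k\in K$, normality of $K$ gives $kh = h(h^{-1}kh)$ with $h^{-1}kh\in K$, so in the notation $kh = h'k'$ we have $h'=h$ and $k' = h^{-1}kh$; that is, $\phi(k,h) = h^{-1}kh$. Consequently the $\phi$-orbits are exactly the orbits of $H$ acting on $K$ by conjugation, and $\phi$ is faithful precisely when that conjugation action is, which is exactly what ``the action of $H$ on $K$ is faithful'' means for a semidirect product. (The left/right discrepancy between $\phi$ and the usual semidirect-product action is harmless, as it changes neither the kernel nor the orbits.) Thus the hypotheses ``$\phi$ faithful'' and ``$\phi$ has finitely many orbits'' in Lemma~\ref{lem:zs} translate verbatim into the hypotheses of the corollary.

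Having matched up all the hypotheses, the third and final step is simply to invoke Lemma~\ref{lem:zs}, which yields that $G$ admits an action of type~(A); the conclusion that $G$ then satisfies the (permutational) Boone--Higman conjecture is immediate from Corollary~\ref{cor:main} and its permutational counterpart. I do not expect any genuine obstacle here: the proof is a short unwinding of definitions, and the only step requiring a moment's care is confirming that the Zappa--Sz\'ep formalism degenerates to ordinary conjugation when $K\trianglelefteq G$, so that ``faithful with finitely many orbits'' carries the same meaning in both settings.
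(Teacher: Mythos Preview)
Your proposal is correct and matches the paper's own proof, which simply says the result is immediate from Lemma~\ref{lem:zs} after noting that $H$, being a quotient of the finitely presented group $G$, is finitely generated. Your additional unwinding of $\phi$ to the conjugation action is a helpful elaboration of what the paper leaves implicit, but the argument is the same.
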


\begin{proof}
This is immediate from Lemma~\ref{lem:zs}, after we note that $H$ is a quotient of $G$ and hence is finitely generated.
\end{proof}

This is less clearly useful, since now the action of $H$ on $K$ is by automorphisms, and it is rather difficult for a group to have finitely many orbits under actions by automorphisms. Thus, we anticipate the general Zappa--Sz\'ep result is more likely to be useful in the future than this semidirect product result. We should also mention that, as best we can tell, the solvability of the word problem for a Zappa--Sz\'ep product (or even semidirect product) of groups with solvable word problem does not obviously follow, if the actions involved are very wild.

\subsection{Simple groups themselves}\label{ssec:simple}

A particularly intriguing question is whether satisfying the Boone--Higman conjecture is equivalent to satisfying the permuational version. In other words, we want to know whether every finitely presented simple group itself embeds into a group admitting an action of type~(A). Simple groups make it particularly easy for an intersection of conjugates of a subgroup to be trivial, so we can phrase some interesting sufficient conditions for satisfying the permutational Boone--Higman conjecture.

First recall the Boone--Higman--Thompson theorem, that every finitely generated group with solvable word problem embeds into a finitely generated simple group $H$ that embeds into a finitely presented group $G$ \cite{boone74,thompson80} (so the Boone--Higman conjecture is that this can be accomplished in a single embedding). We now have the following two sufficient conditions for satisfying the (permutational) Boone--Higman conjecture, which are immediate from Proposition~\ref{prop:double_cosets}:

\begin{corollary}
Let $H$ be a finitely generated simple subgroup of a finitely presented group $G$, and assume $H$ is not normal in $G$. If there are finitely many double cosets in $H\backslash G/H$, then $G$ admits an action of type~(A), and hence satisfies the (permutational) Boone--Higman conjecture.
\end{corollary}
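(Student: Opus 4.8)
The plan is to deduce this directly from Proposition~\ref{prop:double_cosets}, applied with $n = 1$ and $H_1 = H$. Three of the four hypotheses of that proposition are immediate from the present assumptions: $G$ is finitely presented, $H$ is finitely generated, and there are finitely many double cosets in $H\backslash G/H$. So the only point requiring an argument is that the intersection of all $G$-conjugates of $H$ is trivial.

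For this, I would consider the normal core $N \coloneqq \bigcap_{g\in G} gHg^{-1}$. Taking $g = 1$ shows $N \le H$, and $N$ is visibly normal in $G$, hence in particular normal in $H$. Since $H$ is simple, either $N = \{1\}$ or $N = H$. In the latter case $H \le gHg^{-1}$ for every $g\in G$; replacing $g$ by $g^{-1}$ gives $gHg^{-1} \le H$ as well, so $gHg^{-1} = H$ for all $g$, i.e., $H$ is normal in $G$. This contradicts the hypothesis that $H$ is not normal in $G$, so $N = \{1\}$, which is exactly the statement that the intersection of all conjugates of $H$ is trivial.

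With all four hypotheses of Proposition~\ref{prop:double_cosets} verified, that proposition yields an action of type~(A) for $G$ — explicitly, the action of $G$ by left translation on $G/H$ — and the final ``hence'' clause (that $G$ satisfies the permutational, and therefore the ordinary, Boone--Higman conjecture) is part of the conclusion of Proposition~\ref{prop:double_cosets}, ultimately resting on Theorem~\ref{thrm:main} and Corollary~\ref{cor:main}.

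There is essentially no obstacle here: the one slightly non-formal step is the use of simplicity of $H$ together with non-normality of $H$ in $G$ to kill the normal core, and even that is a two-line argument. An alternative is to phrase the corollary as a one-sentence remark observing that simplicity of $H$ forces its normal core to be either trivial or all of $H$, with the latter excluded by hypothesis; I would nonetheless spell out the short argument above for completeness, since it is the only substantive content beyond a citation of Proposition~\ref{prop:double_cosets}.
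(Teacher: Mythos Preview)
Your proposal is correct and follows exactly the same approach as the paper's proof: apply Proposition~\ref{prop:double_cosets} with $n=1$ and $H_1=H$, using simplicity of $H$ together with non-normality to conclude that the normal core of $H$ in $G$ is trivial. The paper compresses the normal-core argument into a single sentence, whereas you spell it out (and note that once you observe $N$ is normal in $G$, the case $N=H$ immediately contradicts non-normality without the extra step of replacing $g$ by $g^{-1}$).
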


\begin{proof}
Since $H$ is simple, and is not normal in $G$, the intersection of all conjugates of $H$ is trivial. The result is now immediate from Proposition~\ref{prop:double_cosets}.
\end{proof}

In other words, if the two embeddings from the Boone--Higman--Thompson theorem can be done in such a way as to ensure finitely many double cosets (and non-normality), then the conjecture holds.

\begin{corollary}
Let $G$ be a finitely presented simple group. Suppose there exists a finitely generated proper subgroup $H$ with finitely many double cosets. Then $G$ admits an action of type~(A), and hence satisfies the permutational Boone--Higman conjecture.
\end{corollary}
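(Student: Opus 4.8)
The plan is to deduce this immediately from Proposition~\ref{prop:double_cosets} in the $n=1$ case, so the only thing to verify is that the hypotheses there are met. First I would observe that since $G$ is simple and $H$ is a \emph{proper} subgroup, the intersection of all conjugates of $H$ is a normal subgroup of $G$ that is properly contained in $G$ (it sits inside $H$), hence must be trivial. This is exactly the triviality-of-conjugate-intersection hypothesis of Proposition~\ref{prop:double_cosets}. Second, $G$ is finitely presented and $H$ is finitely generated by assumption, and the hypothesis that there are finitely many double cosets in $H\backslash G/H$ is precisely the remaining hypothesis of Proposition~\ref{prop:double_cosets} (with $n=1$, $H_1=H$).

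Thus I would simply apply Proposition~\ref{prop:double_cosets}: the action of $G$ by left translation on $S=G/H$ is of type~(A), so in particular $SV_G$ is finitely presented by Theorem~\ref{thrm:main} and $G$ satisfies the permutational Boone--Higman conjecture (and hence the Boone--Higman conjecture). The proof is therefore two or three sentences long.

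The only ``obstacle'' worth flagging is a purely expository one: one should make sure the reader sees why simplicity is being used, namely to force the intersection of conjugates of $H$ to be trivial without any further work — this is the whole point of the remark preceding the corollary that ``simple groups make it particularly easy for an intersection of conjugates of a subgroup to be trivial.'' I would write:

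\begin{proof}
Since $G$ is simple and $H$ is a proper subgroup, the intersection of all conjugates of $H$, being a normal subgroup of $G$ contained in $H\ne G$, is trivial. As $G$ is finitely presented, $H$ is finitely generated, and there are finitely many double cosets in $H\backslash G/H$, the result is immediate from the $n=1$ case of Proposition~\ref{prop:double_cosets}.
\end{proof}
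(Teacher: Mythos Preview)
Your proof is correct and matches the paper's own proof almost verbatim: both observe that simplicity of $G$ forces the intersection of conjugates of the proper subgroup $H$ to be trivial, and then invoke Proposition~\ref{prop:double_cosets} with $n=1$. Your version is slightly more explicit about why the normal core is trivial and about checking the remaining hypotheses, but the argument is the same.
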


\begin{proof}
Since $H$ is proper and $G$ is simple, the intersection of all conjugates of $H$ in $G$ is trivial, so the result is immediate from Proposition~\ref{prop:double_cosets}.
\end{proof}

As a concluding remark, this last result applies to many existing families of finitely presented simple groups, for example it is an easy exercise to find finitely generated proper subgroups with finitely many double cosets inside Thompson's groups $T$ and $V$, and various groups related to these. It is an interesting problem, however, to try to find finitely generated proper subgroups with finitely many double cosets inside finitely presented simple groups coming from outside the extended family of Thompson-like groups. We leave this as a question, phrased more generally as follows:

\begin{question}\label{quest:BHC_PBHC}\!
\begin{enumerate}
    \item Does every finitely presented simple group admit an action of type~(A)?
    \item Does every finitely presented simple group embed in a group admitting an action of type~(A)? (And hence satisfy the permutational Boone--Higman conjecture?)
\end{enumerate}
\end{question}

We should mention that for Burger--Mozes groups, which are outside the Thompson group family, this second question was proved to be true in \cite{bux_boonehigman}, and so Burger--Mozes groups satisfy the permutational Boone--Higman conjecture. Thus, essentially the only known finitely presented simple groups outside the Thompson family for which the permutational Boone--Higman conjecture is open are the non-affine Kac--Moody groups used by Caprace and R\'emy in \cite{caprace09,caprace10}.

To be clear, if the answer to Question~\ref{quest:BHC_PBHC}(ii) is always ``yes'', then the permutational and regular Boone--Higman conjectures are equivalent, and finitely presented twisted Brin--Thompson groups are in some sense universal among finitely presented simple groups.

\bibliographystyle{alpha}

\end{document}